\newtheorem{theorem}{Theorem}[section]
\newtheorem{lemma}[theorem]{Lemma}
\newtheorem{corollary}[theorem]{Corollary}
\newtheorem{question}[theorem]{Question}
\theoremstyle{definition}
\newtheorem{definition}[theorem]{Definition}
\newtheorem{proposition}[theorem]{Proposition}
\theoremstyle{remark}
\newtheorem{remark}[theorem]{Remark}
\newtheorem{example}[theorem]{Example}
\begin{document}

\title
{on $\star$-metric spaces}

\author{Shi-yao He }\thanks{}
\address{(S.Y. He) School of Mathematics and Computational Science, Wuyi University, Jiangmen 529020, P.R. China} \email{2644481035@qq.com}
\author{Li-Hong Xie}\thanks{}
\address{(L.H. Xie) School of Mathematics and Computational Science, Wuyi University, Jiangmen 529020, P.R. China} \email{yunli198282@126.com; xielihong2011@aliyun.com}
\author{Peng-Fei Yan}
\address{(P.-F. Yan)School of Mathematics and Computational Science, Wuyi University, Jiangmen 529020, P.R. China}\email{ypengfei@sina.com}

\subjclass[2010]{54E15, 54E35, 54E50}
\keywords{$\star$-metric spaces; total boundedness; completeness}
\thanks{This research is supported by NSFC (No.11861018), the Natural Science Foundation of Guangdong
Province under Grant (No.2021A1515010381).}

\begin{abstract}
Metric spaces are generalized by many scholars. Recently, Khatami and Mirzavaziri use a mapping called $t$-definer to popularize the triangle inequality and give a generalization of the notion of a metric, which is called a $\star$-metric. In this paper, we prove that every $\star$-metric space is metrizable. Also, we study the total boundedness and completeness of $\star$-metric spaces.
\end{abstract}

\maketitle

\section{Introduction}
It is well known that metric spaces are widely used in analysis. There are several common metric spaces, such as the numerical straight line $\mathbb{R}$, the $n$-dimensional Euclidean space $\mathbb{R}^n$, the continuous function space and the Hilbert space. Therefore, every metric space is an important kind of topological space.  A function $d:X\times X \rightarrow \mathbb{R}^+$ is called a {\it metric} on a set $X$ if $d$ satisfies the following conditions for every $x, y,z\in X$:

(1) $d(x,y)=0$ if and only if $x=y$;

(2) $d(x,y)=d(y,x)$;

(3) $d(x,y)\leqslant d(x,z) + d(z,y)$.

Then set $X$ with $d$ is called a {\it metric space}, denoted by $(X,d)$.

We can obtain the generalizations of metric spaces when we weaken or modify the conditions of metric axiom. Pseudo-metrics are obtained when we change that condition `$\rho(x,y)=0$ if and only if $x=y$' into `$\rho(x,y)=0$ if $x=y$' \cite{GGS}. Quasi-metrics are defined by omitting the condition (2) \cite{14}. Symmetrics are defined by omitting the triangle inequality \cite{dc}. The ultrametric is a metric with the strong triangle inequality $d(x, y) \leqslant \max\{d(x, z), d(z, y)\}$, for $x, y,z\in X$ \cite{11}. There are many generalizations of metric spaces which have appeared in literatures (e.g. see \cite{6,4,3,9,5,CR})

Recently, Khatami and Mirzavaziri popularized the concept of metric. By extending the famous function which is called $t$-conorm, a new operation called $t$-definer is obtained. It is defined as:

\begin{definition} \label{def1} \cite[Definition 2.1]{AG} A {\it $t$-definer} is a function $\star$ : $[0,\infty)\times[0,\infty) \rightarrow [0,\infty)$ satisfying the following conditions for each $a, b,c \in [0,\infty)$:

\begin{enumerate}
\item[(T1)] $a \star b =b \star a$;
\item[(T2)] $a \star (b\star c) =(a \star b)\star c$;
\item[(T3)] if $a \leqslant b$, then $a \star c \leqslant b\star c $;
\item[(T4)] $a \star 0 = a$;
\item[(T5)] $\star$ is continuous in its first component with respect to the Euclidean topology.
\end{enumerate}
\end{definition}
When the condition (3) in the metric axiom is extended to the $\star$-triangle inequality, the following definition of $\star$-metrics can be obtained.

\begin{definition}\label{def2} \cite[Definition 2.2]{AG} Let $X$ be a nonempty set and $\star$ is a $t$-definer. If for every $x, y, z\in X$, a function $d^\star:X\times X \rightarrow [0,\infty)$ satisfies the following conditions:
\begin{enumerate}
\item[(M1)] $d^\star(x,y)=0$ if and only if $x=y$;
\item[(M2)] $d^\star(x,y)=d^\star(y,x)$;
\item[(M3)] $d^\star(x, y)\leqslant d^\star(x,z)\star d^\star(z,y)$,
\end{enumerate}
then $d^\star$ is called a {\it $\star$-metric} on $X$. The set $X$ with a $\star$-metric is called {\it $\star$-metric space}, denoted by $(X,d^\star)$.
\end{definition}

The following example shows that there are $\star$-metrics which are not metrics.

\begin{example}\cite[Example 2.4.]{AG} Clearly, $a \star b =(\sqrt{a}+\sqrt{b})^2$ is a t-definer. The function $d^\star(a,b) =(\sqrt{a}-\sqrt{b})^2$ forms an $\star$-metric which is not a metric. In fact,

\begin{equation}
\begin{aligned}
d^\star(a,b) &=(\sqrt{a}-\sqrt{b})^2=(\sqrt{a}-\sqrt{c}+ \sqrt{c}-\sqrt{b})^2\\
&\leqslant[\sqrt{(\sqrt{a}-\sqrt{c})^2}+\sqrt{(\sqrt{c}-\sqrt{b})^2}]^2\\
&=[\sqrt{d^\star(a,c)}+\sqrt{d^\star(c,b)}]^2\\
&=d^\star(a,c)\star d^\star(c,b)\nonumber
\end{aligned}
\end{equation}
while $d^\star(1,25)=16\nleqslant d^\star(1,16)+d^\star(16,25)=10.$
\end{example}

\begin{remark}
There are two most important $t$-definers:

$\bullet$ Lukasiewicz $t$-definer: $a \star_{L} b =a+b$;

$\bullet$ Maximum $t$-definer: $a \star_{m} b =\max{\{a,b\}}.$

Obviously, a $\star_L$-metric is actually a metric and a $\star_m$-metric is an ultrametric.
\end{remark}



Assume that $(M, d^\star)$ is a $\star$-metric space. For any $a\in M$ and $r > 0$, denote by
$$B_{d^\star}(a, r)=\{x\in M: d^\star(a,x)<r\}$$ and $$\mathscr{T}_{d^\star}=\{U\subseteq M: \text{~for each~}a\in U\text{~there is~}r>0 \text{~such that~}B_{d^\star}(a,r)\subseteq U\}.$$

Let $\mathscr{B}_{\frac{1}{n}}=\{B_{d^\star}(x,\frac{1}{n})\mid x\in X\}$ be a family open balls on a $\star$-metric space $(X, d^\star)$.

Khatami and Mirzavaziri proved the following result:

\begin{theorem}\cite[Theorems 3.2, 3.4, 3.5]{AG}
For every $\star$-metric space $(X, d^\star)$, the $\mathscr{T}_{d^\star}$ forms a Hausdorff topology on $X$ and the topological space $(X, \mathscr{T}_{d^\star})$ is first countable and satisfied the normal separation axiom.
\end{theorem}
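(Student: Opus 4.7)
The plan is to mimic the classical metric-space proof in each of the three assertions (topology+Hausdorff, first countability, normality), using (T5) (separate continuity) to substitute for continuity of addition and (T3) (monotonicity) to substitute for the additive behaviour of the triangle inequality. First countability will come almost for free. The real obstacle is normality: the $\star$-triangle inequality does not support the usual ``halving'' argument, and this forces an auxiliary lemma stating that $t\star t\to 0$ as $t\to 0^+$.

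That $\mathscr{T}_{d^\star}$ is closed under arbitrary unions and finite intersections is a direct unwinding of the definition (for an intersection take the minimum of the two witnessing radii). The subtler point is that every open ball $B_{d^\star}(x,r)$ itself lies in $\mathscr{T}_{d^\star}$: given $y\in B_{d^\star}(x,r)$, (T4) and (T1) give $0\star d^\star(y,x)=d^\star(y,x)<r$, so by (T5) there exists $\delta>0$ with $\delta\star d^\star(y,x)<r$; then for any $z\in B_{d^\star}(y,\delta)$, (M3) and (T3) yield
\[
d^\star(z,x)\leq d^\star(z,y)\star d^\star(y,x)\leq \delta\star d^\star(y,x)<r.
\]
Hausdorffness is of the same flavour: for $x\neq y$, set $s=d^\star(x,y)/2$; then $0\star s=s<d^\star(x,y)$, so (T5) provides $r>0$ with $r\star s<d^\star(x,y)$, and then $B_{d^\star}(x,r)\cap B_{d^\star}(y,s)=\emptyset$ by (M3) together with two applications of (T3) (with (T1) for symmetry). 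First countability follows immediately, since $\{B_{d^\star}(x,1/n):n\geq 1\}$ is a countable local base at $x$.

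For normality I would first establish the \emph{key lemma} that $\lim_{t\to 0^+} t\star t=0$. Fix $\epsilon>0$; for $0<t\leq\epsilon$, monotonicity (T3) gives $t\star t\leq \epsilon\star t$, and by (T5) combined with (T1), $\epsilon\star t\to \epsilon\star 0=\epsilon$ as $t\to 0^+$; letting $\epsilon\to 0^+$ proves the lemma. Now let $A,B$ be disjoint closed sets. By first countability, for each $x\in A$ pick $m_x\in\mathbb{N}$ with $B_{d^\star}(x,1/m_x)\cap B=\emptyset$, and by the lemma pick $n_x\geq m_x$ with $(1/n_x)\star(1/n_x)<1/m_x$; choose $m_y,n_y$ symmetrically for each $y\in B$. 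Let $U=\bigcup_{x\in A}B_{d^\star}(x,1/n_x)$ and $V=\bigcup_{y\in B}B_{d^\star}(y,1/n_y)$, which are open and contain $A,B$ respectively. If some $z\in U\cap V$ lies in $B_{d^\star}(x,1/n_x)\cap B_{d^\star}(y,1/n_y)$ with (WLOG) $n_x\leq n_y$, then by (M3) and two uses of (T3)
\[
d^\star(x,y)\leq (1/n_x)\star(1/n_y)\leq (1/n_x)\star(1/n_x)<1/m_x,
\]
forcing $y\in B_{d^\star}(x,1/m_x)\cap B$, contradicting the choice of $m_x$. Hence $U\cap V=\emptyset$ and $(X,\mathscr{T}_{d^\star})$ is normal.
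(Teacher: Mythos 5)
Your proof is correct, but note that the paper does not actually prove this statement itself: it is imported from Khatami--Mirzavaziri \cite[Theorems 3.2, 3.4, 3.5]{AG} and used as motivation for Question \ref{q1}. The closest the paper comes is its Lemma \ref{lem2}--Theorem \ref{Th1} route: it proves the strictly stronger fact that $\mathscr{T}_{d^\star}$ is metrizable, by showing the covers $\mathscr{B}_{1/n}$ form a countable base of a uniformity and invoking the metrization theorem for uniformities, after which Hausdorffness, first countability and normality are automatic. Your key lemma, $\lim_{t\to 0^+}t\star t=0$, is exactly the content of the paper's Lemma \ref{lem3} ($[0,r_1)\star[0,r_1)\subseteq[0,r)$), proved by essentially the same monotonicity-plus-separate-continuity argument, and it plays the same role in your normality proof that it plays in the paper's verification of the star-refinement axiom (U3). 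What your approach buys is a self-contained, elementary proof that never leaves the language of balls; what the paper's approach buys is the stronger conclusion of metrizability at roughly the same cost. Two cosmetic points: in your proof that balls are open (and likewise in the Hausdorff step), after (T5) hands you an interval $[0,\delta)$ on which $t\star d^\star(y,x)<r$, you should either shrink $\delta$ so that $\delta\star d^\star(y,x)<r$ holds at the endpoint or apply (T3) with the actual value $t=d^\star(z,y)<\delta$; and in the normality step the choice of $m_x$ uses only that $X\setminus B$ is open, not first countability. Neither affects correctness.
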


The following question is posed naturally.

\begin{question}\label{q1}
Is the topological space $(X, \mathscr{T}_{d^\star})$ metrizable for every $\star$-metric space $(X, d^\star)$?
\end{question}

In this paper, we give a positive answer to Question \ref{q1}. Also, we extend the concepts of the totally boundedness and completeness in metric spaces into $\star$-metric spaces and discuss their properties.

This paper is organized as follows. Section 2 is given a positive answer to Question \ref{q1}. We obtain that let $(X, d^\star)$ be a $\star$-metric space; then the set $X$ with the topology $\mathscr{T}_{d^\star}$ induced by $d^\star$ is metrizable (see Theorem \ref{Th1}). In Section 3 total boundedness of $\star$-metric spaces are studied. We prove that: (1) let $(X,d^\star)$ be a totally bounded $\star$-metric space; then for every subset $M$ of $X$ the $\star$-metric space $(M,d^\star)$ is totally bounded (see Theorem \ref{thm2}); (2) let $(X,d^\star)$ be a $\star$-metric space and for every subset $M$ of $X$ the space  $(M,d^\star)$ is totally bounded if and only if $(\overline{M},d^\star)$ is totally bounded (see Theorem \ref{thm2.2}). We show that the Cartesian product and disjoint union of finite totally bounded $\star$-metric spaces are totally bounded under specific $\star$-metrics (see Theorems \ref{thm4} and \ref{thm5.2}). In Section 4, the completeness of $\star$-metric spaces are explored. We obtain that: (1) A $\star$-metric space $(X,d^\star)$ is compact if and only if $(X,d^\star)$ is complete and totally bounded (see Theorem \ref{thm13});
(2) A $\star$-metric space is complete if and only if for every decreasing sequence $F_{1}\supseteq F_{2}\supseteq F_{3}\supseteq\ldots$ of non-empty closed subsets of space $X$, such that $\lim_{n\rightarrow \infty }\delta(F_{n})=0$, the intersection $\bigcap_{n=1}^\infty F_{n}$ is a one-point set (see Theorem \ref{thm7});
(3) the completeness of the Cartesian product and disjoint union of complete $\star$-metric spaces under specific $\star$-metrics (see Theorems \ref{thm11} and \ref{thm11.2}). 
(4) In a complete $\star$-metric space $(X,d^\star)$ the intersection $A=\bigcap_{n=1}^\infty A_{n}$ of a sequence $A_{1},A_{2},\dots$ of dense open subsets is a dense set (see Theorem \ref{thm12}).

\section{Metrizability of $\star$-metric spaces}
In this section, we shall give a positive answer to Question \ref{q1}. Let $(X,d^\star)$ be a $\star$-metric space, we shall prove that the set $X$ with the topology $\mathscr{T}_{d^\star}$ induced by $d^\star$ is metrizable. We need to use some related symbols, terms, and preliminary facts.

Let $\mathscr{U}$ be a cover of a set $X$. For $x\in X$, denote by st($x,\mathscr{U}$)=$\bigcup\{U:U \in \mathscr{U},x\in U\}$ and st($A,\mathscr{U}$)=$\bigcup_{x\in A}\text{st}(x,\mathscr{U})$ for $A\subseteq X$. Let $\mathscr{U}$, $\mathscr{V}$ be two covers of a set $X$, we say that the cover $\mathscr{V}$  {\it refines} $\mathscr{U}$, if for every $V\in \mathscr{V}$, there exists an $U\in \mathscr{U}$ such that $V\subset U$, denoted by $\mathscr{U}< \mathscr{V}$; if $\{st(V,\mathscr{V}):V \in \mathscr{V}\} $ refines $\mathscr{U}$, then we said that $\mathscr{V}$ {\it star refines} $\mathscr{U}$, denoted by $\mathscr{U} \mathop{<}\limits_{}^*\mathscr{V}$.

\begin{definition} \cite[Definition 4.5.1]{GGS}\label{def} Let $X$ be a set and $\Phi=\{\mathscr{U}_\alpha:\alpha \in A\}$ a non-empty collection of covers of $X$ which satisfies:
\begin{enumerate}
\item[(U1)]  if $\mathscr{U}$ is a cover of $X$ such that $\mathscr{U}_\alpha < \mathscr{U}$ for some $\alpha\in A$, then $\mathscr{U}\in\Phi$;

\item[(U2)] for any $\alpha, \beta\in A$, there exists an $\gamma\in A$ such that $\mathscr{U}_\gamma < \mathscr{U}_\alpha$, $\mathscr{U}_\gamma < \mathscr{U}_\beta$;

\item[(U3)] for every $\alpha\in A$, there exists an $\beta\in A$ such that $\mathscr{U}_\beta \mathop{<}\limits_{}^* \mathscr{U}_\alpha$;

\item[(U4)] if $x, y$ are different elements of $X$, then $x\notin \text{st}(y,\mathscr{U}_\alpha)$ for some $\alpha\in A$.
\end{enumerate}

Then $X$ is called a {\it uniform space} with the uniformity $\Phi$, denoted by $(X,\Phi)$. Let $X$ be a uniform space with the uniformity $\Phi=\{\mathscr{U}_\alpha:\alpha \in A\}$ and let $\{\mathscr{U}_\beta:\beta \in B\}$ be a subcollection of $\Phi$. If for every $\alpha\in A$, there exists a $\beta\in B$ such that $\mathscr{U}_\beta < \mathscr{U}_\alpha$, then the collection $\Phi'$ is called {\it a basis of the uniformity}.

Let $X$ be a uniform space with the uniformity $\Phi=\{\mathscr{U}_\alpha:\alpha \in A\}$
and
$$\mathscr{T}_{\Phi}=\{U:U\subseteq X, \text{~for each~}x\in U, \text{~there is~} \alpha \in A \text{~such that~} \text{st}(x,\mathscr{U}_{\alpha})\subset U\}.$$ Then $\mathscr{T}_{\Phi}$ is a topology on the $X$.
\end{definition}


Recalled that a topological space $X$ is said to be {\it metrizable} if there exists a metric $d$ on the set $X$ that induces the topology of $X$.




\begin{lemma}\cite[Theorem 4.5.9]{GGS}\label{lem2} Let $(X,\Phi)$ be a uniform space. Then the set with the topology $\mathscr{T}_{\Phi}$ induced by $\Phi$ is metrizable if and only if there is a base of the uniformity consisting of countably many covers.
\end{lemma}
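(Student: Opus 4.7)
The plan is to prove both directions, though the substantive content is the backward implication $(\Leftarrow)$, which is the classical Alexandroff--Urysohn metrization scheme adapted to the uniform framework.

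For $(\Leftarrow)$, assume $\Phi$ has a countable base $\{\mathscr{U}_n : n\in\mathbb{N}\}$. Using axioms (U2) and (U3), I would inductively extract a subsequence $\{\mathscr{V}_n\}$ that remains a base of $\Phi$ and satisfies $\mathscr{V}_n \mathop{<}\limits_{}^* \mathscr{V}_{n+1}$ for every $n$; at step $n+1$ one picks $\mathscr{V}_{n+1}$ to be a common star-refinement of $\mathscr{V}_n$ and $\mathscr{U}_{n+1}$. Then I define an auxiliary function $f : X\times X \to [0,1]$ by $f(x,x)=0$, $f(x,y)=2^{-n}$ when $n$ is the largest index with $y\in \text{st}(x,\mathscr{V}_n)$, and $f(x,y)=1$ if $y\notin \text{st}(x,\mathscr{V}_1)$. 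Axiom (U4) guarantees $f(x,y)>0$ whenever $x\ne y$. The function $f$ is symmetric but in general fails the triangle inequality, so I pass to the chain pseudometric
\[
d(x,y) \;=\; \inf\Bigl\{\sum_{i=1}^{k} f(x_{i-1},x_i) \,:\, k\in\mathbb{N},\ x_0=x,\ x_k=y\Bigr\}.
\]

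The technical heart is the two-sided comparison $\tfrac{1}{2}f(x,y)\le d(x,y) \le f(x,y)$. The upper bound is immediate from a length-one chain. For the lower bound I would proceed by induction on the chain length, splitting a chain at the midpoint of the partial sums and then using the star-refinement property $\mathscr{V}_n\mathop{<}\limits_{}^*\mathscr{V}_{n+1}$ to merge two consecutive $f$-values of level $n+1$ into a single $f$-value of level $n$, losing at most a factor of two in the process. Once this comparison is in hand, $d$ is a genuine metric on $X$ (positivity off the diagonal comes from (U4)), and the collection $\{B_d(x,2^{-n})\}_n$ is equivalent, as a neighbourhood base at each $x$, to $\{\text{st}(x,\mathscr{V}_n)\}_n$; hence $d$ induces precisely $\mathscr{T}_\Phi$.

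For $(\Rightarrow)$, if $\mathscr{T}_\Phi$ is induced by a metric $d$ compatible with $\Phi$, the countable family of covers $\mathscr{W}_n=\{B_d(x,1/n):x\in X\}$ sits in $\Phi$, and using the compatibility of $d$ with $\Phi$ one verifies that every $\mathscr{U}_\alpha\in\Phi$ is refined by some $\mathscr{W}_n$, so $\{\mathscr{W}_n\}$ is a countable base. The principal obstacle is the comparison inequality $f(x,y)\le 2d(x,y)$ in the backward direction: this is the classical delicate induction, and it is exactly where the star-refinement axiom (U3) and the geometric decay $2^{-n}$ in the definition of $f$ are used in an essential way; without star-refinement one obtains only $d\le f$, which does not suffice to recover the original uniformity.
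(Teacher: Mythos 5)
The paper offers no proof of Lemma~\ref{lem2}: it is quoted verbatim from \cite{GGS}, so there is no internal argument to measure you against. Your $(\Leftarrow)$ direction is the standard Alexandroff--Urysohn chaining proof and is essentially sound: extract a normal sequence $\mathscr{V}_n \mathop{<}\limits_{}^{*} \mathscr{V}_{n+1}$ from the countable base using (U2) together with (U3) (star-refine a common refinement), set $f(x,y)=2^{-n}$ according to the last level at which $y\in\text{st}(x,\mathscr{V}_n)$ (the largest such $n$ exists for $x\neq y$ precisely by (U4) plus the base property), pass to the chain pseudometric, and prove $\tfrac12 f\leqslant d\leqslant f$ by the split-at-half-the-partial-sums induction. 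One imprecision: the inductive step produces \emph{three} pieces (the two halves plus the middle link), so what you must extract from star-refinement is the triple composition ``$f\leqslant 2^{-(n+1)}$ on three consecutive pairs forces $f\leqslant 2^{-n}$ on the endpoints''; this does follow from one star-refinement of covers (all three witnessing sets lie in the star of the middle one), but your phrase ``merge two consecutive $f$-values'' understates what has to be checked. Since Theorem~\ref{Th1} only invokes this direction of the lemma, the part of your argument that the paper actually needs is correct.

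The genuine gap is in $(\Rightarrow)$. You quietly assume the metric $d$ is ``compatible with $\Phi$'', i.e., that the covers $\mathscr{W}_n=\{B_d(x,1/n):x\in X\}$ lie in $\Phi$ and that every $\mathscr{U}_\alpha$ is refined by some $\mathscr{W}_n$. But the hypothesis of the lemma is only that the \emph{topology} $\mathscr{T}_\Phi$ is metrizable, which supplies a metric inducing the topology, not the uniformity; with merely topological compatibility there is no reason for $\mathscr{W}_n\in\Phi$, and the forward implication is in fact false as literally stated: $\mathbb{N}$ with the uniformity inherited from its embedding in $\beta\mathbb{N}$ carries the discrete (hence metrizable) topology, yet a countable base for that uniformity would force its completion $\beta\mathbb{N}$ to be metrizable, which it is not. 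So either the lemma must be read as asserting \emph{uniform} metrizability (the classical uniform metrization theorem, under which your $(\Rightarrow)$ becomes correct and essentially trivial), or the forward implication should be dropped. This is really a defect of the statement as transcribed rather than of your argument alone, and it does not affect the paper, which uses only $(\Leftarrow)$; but as written your forward direction proves a different (weaker) claim than the one announced.
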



%




 The following theorem shows that $\star$ is continuous at the point $(0,0)$.

\begin{lemma}\label{lem3}
For $r>0$, there exists $r_{1}>0$ such that $[0,r_{1})\star[0,r_{1})\subseteq [0,r)$.
\end{lemma}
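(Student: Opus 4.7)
The plan is to combine the continuity of $\star$ in its first component (T5) with the monotonicity axiom (T3, together with the symmetry T1) and the unit property $0 \star c = c$ from T4. The key observation is that T5 only gives \emph{separate} continuity, not joint continuity at $(0,0)$, so we cannot directly ``apply continuity in two variables.'' The trick is to use monotonicity to collapse one of the two inputs to a fixed value, and then invoke T5 on the remaining variable.

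Concretely, I would proceed as follows. First, set $c = r/2$ and note that by T4, $0 \star c = c = r/2 < r$. Since T5 asserts that the map $a \mapsto a \star c$ is continuous on $[0,\infty)$, there exists $\delta > 0$ such that
\[
a \in [0,\delta) \quad \Longrightarrow \quad a \star (r/2) < r.
\]
Next, set $r_1 = \min\{\delta, r/2\}$. For arbitrary $a, b \in [0, r_1)$, note that $b < r_1 \leq r/2$. Using T1 and T3 (which together yield monotonicity in the second component), we get $a \star b \leq a \star (r/2)$. Since $a < r_1 \leq \delta$, the previous step gives $a \star (r/2) < r$, and hence $a \star b < r$. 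This shows $[0, r_1) \star [0, r_1) \subseteq [0, r)$.

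There is no serious obstacle here; the only conceptual point is noticing that T5 must be applied after fixing the second argument at a convenient value (here $r/2$), and that monotonicity is what bridges the gap between separate continuity and the joint smallness statement we want.
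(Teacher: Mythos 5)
Your proof is correct and follows essentially the same route as the paper: fix the second argument at $r/2$, use (T4) and (T5) to get a one-sided continuity estimate $a\star(r/2)<r$ for small $a$, and then use monotonicity (T3 with T1) to bound $a\star b\leqslant a\star(r/2)$; your choice $r_1=\min\{\delta,r/2\}$ plays the same role as the paper's $r_1=\tfrac12 r_0$. No issues.
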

\begin{proof}
For $r>0$, we have $0 \star \frac{1}{2} r \in [0,r)$ by  [Definition \ref{def1}, (T4)]. According to [Definition \ref{def1}, (T5)], there exists an $r_{0}>0$ such that $[0,r_{0})\star \frac{1}{2} r\subseteq [0,r)$. Without loss of generality, let $0\leqslant r_{0} \leqslant r$ by [Definition\ref{def1},(T3)]. Take $r_{1}=\frac{1}{2} r_{0}$. Then, we can claim that $[0,r_{1})\star[0,r_{1})\subseteq [0,r)$. For every $x, y\in [0,r_{1})$, we have that $$ x\star y\leqslant x\star \frac{1}{2} r.$$

Noting that $x\star \frac{1}{2} r \in [0,r_{1})\star \frac{1}{2} r \subseteq [0,r_{0}) \star \frac{1}{2} r \subseteq [0,r)$, we have that $x\star \frac{1}{2} r \in [0,r)$, which means $x\star \frac{1}{2} r < r$. Since $ x\star y\leqslant x\star \frac{1}{2} r $, $ x\star y < r $, therefore $x, y\in [0,r)$.
\end{proof}

The following theorem gives a positive answer to Question \ref{q1}.

\begin{theorem} \label{Th1}
Let $(X, d^\star)$ be a $\star$-metric space. Then the set $X$ with the topology $\mathscr{T}_{d^\star}$ induced by $d^\star$ is metrizable.
\end{theorem}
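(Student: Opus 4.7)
The plan is to invoke Lemma \ref{lem2}: produce a uniformity $\Phi$ on $X$ with a countable base of covers whose induced topology $\mathscr{T}_{\Phi}$ coincides with $\mathscr{T}_{d^\star}$, and then metrizability is immediate.

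For each $n\in\mathbb{N}$ let $\mathscr{U}_{n}=\{B_{d^\star}(x,\tfrac{1}{n}):x\in X\}$, and take $\Phi$ to be the collection of all covers $\mathscr{U}$ of $X$ for which $\mathscr{U}_{n}<\mathscr{U}$ for some $n$. Axioms (U1) and (U2) are immediate: (U1) holds by construction, and (U2) holds because $\mathscr{U}_{\max\{m,n\}}$ refines both $\mathscr{U}_{m}$ and $\mathscr{U}_{n}$. The base of $\Phi$ consisting of the countable family $\{\mathscr{U}_{n}\}_{n\in\mathbb{N}}$ will then give the hypothesis of Lemma \ref{lem2}.

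The main obstacle is verifying the star-refinement axiom (U3), which is where Lemma \ref{lem3} does the real work. Given $n$, I would iterate Lemma \ref{lem3} twice: first choose $r_{1}>0$ with $[0,r_{1})\star[0,r_{1})\subseteq[0,\tfrac{1}{n})$, and then choose $r_{2}>0$ with $[0,r_{2})\star[0,r_{2})\subseteq[0,r_{1})$. Pick $m$ so that $\tfrac{1}{m}<\min\{r_{1},r_{2}\}$. The claim is that $\mathscr{U}_{m}$ star-refines $\mathscr{U}_{n}$. Indeed, fix $B_{d^\star}(a,\tfrac{1}{m})\in\mathscr{U}_{m}$; if another ball $B_{d^\star}(y,\tfrac{1}{m})$ meets it at some point $w$, then for every $z\in B_{d^\star}(y,\tfrac{1}{m})$ applying (M3) twice yields
\begin{equation*}
d^\star(a,z)\;\leqslant\;d^\star(a,w)\star d^\star(w,y)\star d^\star(y,z)\;<\;\tfrac{1}{m}\star\tfrac{1}{m}\star\tfrac{1}{m},
\end{equation*}
and by the choice of $m$ the right-hand side is bounded by $r_{1}\star\tfrac{1}{m}<\tfrac{1}{n}$ (using (T3) to replace each entry by its upper bound). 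Hence $\mathrm{st}(B_{d^\star}(a,\tfrac{1}{m}),\mathscr{U}_{m})\subseteq B_{d^\star}(a,\tfrac{1}{n})\in\mathscr{U}_{n}$. Axiom (U4) is handled similarly: if $x\neq y$ then $r=d^\star(x,y)>0$, and Lemma \ref{lem3} gives $n$ with $\tfrac{1}{n}\star\tfrac{1}{n}<r$, so no ball of radius $\tfrac{1}{n}$ contains both $x$ and $y$, whence $x\notin\mathrm{st}(y,\mathscr{U}_{n})$.

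It remains to check that the uniform topology $\mathscr{T}_{\Phi}$ agrees with $\mathscr{T}_{d^\star}$. One inclusion is trivial since $\mathrm{st}(x,\mathscr{U}_{n})\supseteq B_{d^\star}(x,\tfrac{1}{n})$, so every $\mathscr{T}_{\Phi}$-open set is $\mathscr{T}_{d^\star}$-open. For the converse, given $U\in\mathscr{T}_{d^\star}$ and $x\in U$ with $B_{d^\star}(x,r)\subseteq U$, apply Lemma \ref{lem3} to find $n$ so that $\tfrac{1}{n}\star\tfrac{1}{n}<r$; then (M3) forces $\mathrm{st}(x,\mathscr{U}_{n})\subseteq B_{d^\star}(x,r)\subseteq U$, showing $U\in\mathscr{T}_{\Phi}$. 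Having produced a uniformity with a countable base inducing $\mathscr{T}_{d^\star}$, Lemma \ref{lem2} finishes the proof.
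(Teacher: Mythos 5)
Your proposal is correct and follows essentially the same route as the paper: build the uniformity generated by the countable family of covers $\{B_{d^\star}(x,\tfrac{1}{n}):x\in X\}$, verify (U2)--(U4) via Lemma \ref{lem3} (with the triple-star estimate for star-refinement), check that the uniform topology equals $\mathscr{T}_{d^\star}$, and conclude by Lemma \ref{lem2}. The only cosmetic quibble is the intermediate bound $r_{1}\star\tfrac{1}{m}<\tfrac{1}{n}$, which is cleaner stated as $\tfrac{1}{m}\star\tfrac{1}{m}\star\tfrac{1}{m}\in[0,r_{1})\star[0,r_{1})\subseteq[0,\tfrac{1}{n})$ since both factors lie in $[0,r_{1})$.
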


\begin{proof}
First we shall show that $\mathscr{B}=\{\mathscr{B}_\frac{1}{n}:n \in \mathbb{N}\}$ is a base of a uniformity on set $X$, where $\mathscr{B}_\frac{1}{n}=\{B_{d^\star}(x,\frac{1}{n})\mid x\in X\}$. Indeed, it is enough to show that $\mathscr{B}$ satisfies (U2)$\sim$(U4) in Definition\ref{def1}.

(U2). For any $x \in X$, for any $n_{1},n_{2} \in \mathbb{N}$, take $n_{0} \in \mathbb{N}$, such that $n_{0}>n_{1}$, $n_{0}>n_{2}$. Take any $y\in B_{d^\star}(x,\frac{1}{n_{0}})$, then we have that $d^\star(x,y)< \frac{1}{n_{0}}< \frac{1}{n_{1}}$, thus $y\in B_{d^\star}(x,\frac{1}{n_{1}})$. This implies that $B_{d^\star}(x,\frac{1}{n_{0}})\subset B_{d^\star}(x,\frac{1}{n_{1}})$. Therefore $\mathscr{B}_\frac{1}{n_{0}}$ $< $ $\mathscr{B}_\frac{1}{n_{1}}$.
Similarly, we can prove that $\mathscr{B}_\frac{1}{n_{0}}< \mathscr{B}_\frac{1}{n_{2}}$. So, $\mathscr{B}$ satisfies (U2).

(U3). For any $n_{0}\in \mathbb{N}$, by Lemma \ref{lem3}, there exists an $r_{1}\in \mathbb{N}$ such that $r_{1}\star r_{1}\star r_{1}< \frac{1}{n_{0}}$. Take $n_{1}\in \mathbb{N}$ such that $\frac{1}{n_{1}}< r_{1}$. Now we shall prove that $\mathscr{B}_\frac{1}{n_{1}} \mathop{<}\limits_{}^* \mathscr{B}_\frac{1}{n_{0}}$. Hence the proof is completed once we show that st($B_{d^\star}(x,\frac{1}{n_{1}})$, $\mathscr{B}_\frac{1}{n_{1}}$)$\subseteq B_{d^\star}(x,\frac{1}{n_{0}})$, for any $x\in X$. Take any $y\in X$ such that $B_{d^\star}(y,\frac{1}{n_{1}}) \cap B_{d^\star}(x,\frac{1}{n_{1}})\neq \emptyset$. Then, there exists an $z_{1}\in B_{d^\star}(y,\frac{1}{n_{1}}) \cap B_{d^\star}(x,\frac{1}{n_{1}})$, for any $z_{2}\in B_{d^\star}(y,\frac{1}{n_{1}})$, we have
\begin{equation}
\begin{aligned}
d^\star(z_{2},x) &\leqslant d^\star(z_{2},y)\star d^\star(y,z_{1})\star d^\star(z_{1},x)\\
&<\frac{1}{n_{1}}\star \frac{1}{n_{1}}\star \frac{1}{n_{1}}<r_{1}\star r_{1}\star r_{1}\\
&< \frac{1}{n_{0}}.\nonumber
\end{aligned}
\end{equation}

Therefore, $z_{2}\in B_{d^\star}(y,\frac{1}{n_{0}})$, $\mathscr{B}$ satisfies (U3).

(U4). For $x,y \in X$ and $x\neq y $, let $d^\star(x,y)=r$ , then $r>0$. Take $n_{1} \in \mathbb{N}$, such that $\frac{1}{n_{1}}< r$, then we have $y \notin B_{d^\star}(x,\frac{1}{n_{1}})$. By Lemma \ref{lem3}, there exists an $n_{0} \in \mathbb{N}$ such that $\frac{1}{n_{0}}\star\frac{1}{n_{0}}< \frac{1}{n_{1}}$. Then, we claim that for any $B_{d^\star}(z_{0},\frac{1}{n_{0}})\in \mathscr{B}_\frac{1}{n_{0}}$ $(z_{0} \in X)$, $B_{d^\star}(z_{0},\frac{1}{n_{0}})$ can not contain both $x$ and $y$. Since, if $x,y\in B_{d^\star}(z_{0},\frac{1}{n_{0}})$, then $d^\star(x,y)\leqslant d^\star(x,z_{0}) \star d^\star(z_{0},y) < \frac{1}{n_{0}}\star\frac{1}{n_{0}} < \frac{1}{n_{1}} < r$, which is contradictory to $d^\star(x,y)=r$. So, $\mathscr{B}$ satisfies (U4).

 Thus we have proved that $\mathscr{B}$ is a base of a uniformity on $X$, denote by $\Phi_{d^\star}$. According to Lemma \ref{lem2}, the set $X$ with the topology $\mathscr{T}_{\Phi_{d^\star}}$ induced by the uniformity $\Phi_{d^\star}$ is metrizable. Therefore, to complete the proof, it is enough to prove that the topology $\mathscr{T}_{d^\star}$ induced by $d^\star$ is the same as the topology $\mathscr{T}_{\Phi_{d^\star}}$.

(1) For any $U\in \mathscr{T}_{d^\star}$, $x\in U$, there exists $n \in \mathbb{N}$  such that $B_{d^\star}(x,\frac{1}{n})\subset U$. By Lemma \ref{lem3}, there exists an $n_{0} \in \mathbb{N}$ such that $\frac{1}{n_{0}}\star\frac{1}{n_{0}}< \frac{1}{n}$. Then we shall prove that  $\text{st}(x,\mathscr{B}_{\frac{1}{n_{0}}}) \subset B_{d^\star}(x,\frac{1}{n})\subset U$. Take any $z\in \text{st}(x,\mathscr{B}_{\frac{1}{n_{0}}})$, then there exists $y \in X$  such that $x\in \text{st}(y,\mathscr{B}_{\frac{1}{n_{0}}})$ and $z\in \text{st}(y,\mathscr{B}_{\frac{1}{n_{0}}})$. Since,
\begin{equation}
\begin{aligned}
d^\star(x,z) &\leqslant d^\star(x,y)\star d^\star(y,z)<\frac{1}{n_{0}}\star\frac{1}{n_{0}}<\frac{1}{n},\nonumber
\end{aligned}
\end{equation}
we have $z\in \text{st}(x,\mathscr{B}_{\frac{1}{n}})$, i.e. $\text{st}(x,\mathscr{B}_{\frac{1}{n_{0}}}) \subset U$, so $U\in \mathscr{T}_{\Phi_{d^\star}}$. Therefore $ \mathscr{T}_{d^\star} \subseteq \mathscr{T}_{\Phi_{d^\star}}$.

(2) For any $U\in \mathscr{T}_{\Phi_{d^\star}}$, $x\in U$, there exists $n \in \mathbb{N}$  such that $\text{st}(x,\mathscr{B}_{\frac{1}{n}})\subset U$. Since  $B_{d^\star}(x,\frac{1}{n})\subset\text{st}(x,\mathscr{B}_{\frac{1}{n}}) \subset U$, we have $U\in \mathscr{T}_{d^\star}$, i.e. $ \mathscr{T}_{\Phi_{d^\star}}\subseteq \mathscr{T}_{d^\star}$. Then we get that $ \mathscr{T}_{\Phi_{d^\star}}= \mathscr{T}_{d^\star}$.
\end{proof}

Obviously, since $\star$-metric spaces are metrizable, we have the following two corollaries.

\begin{corollary} Let $(X,d^\star)$ be a $\star$-metric space and $X$ with the topology $\mathscr{T}_{d^\star}$ induced by $d^\star$. Then the following statements equivalent:
\begin{enumerate}
\item[(1)] X has a countable basis;
\item[(2)] X is Lindel\"{o}f;
\item[(3)] every closed discrete subspace is a countable set;
\item[(4)] every  discrete subspace is a countable set;
\item[(5)] every collection of disjoint open sets in X is  countable;
\item[(6)] X has a countable dense subset.

\end{enumerate}
\end{corollary}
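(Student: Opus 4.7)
The plan is to reduce everything to the classical theory of separable/Lindel\"of metric spaces. Theorem \ref{Th1} provides a genuine metric $d$ on $X$ generating $\mathscr{T}_{d^\star}$, so the six conditions become statements about the metric space $(X,d)$, where their equivalence is folklore. Accordingly I would establish the cycle $(1)\Rightarrow(2)\Rightarrow(5)\Rightarrow(4)\Rightarrow(3)\Rightarrow(6)\Rightarrow(1)$, using only the metric $d$ given by Theorem \ref{Th1}.

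For the routine links: $(1)\Rightarrow(2)$ is the standard observation that from any open cover one extracts, for each point, a basic open set from the countable base, obtaining a countable subcover. $(2)\Rightarrow(5)$ follows because any pairwise disjoint family $\mathscr{V}$ of nonempty open sets, together with $X\setminus\bigcup\mathscr{V}$, forms an open cover of $X$, whose Lindel\"of subcover can miss at most countably many members of $\mathscr{V}$; since distinct members of $\mathscr{V}$ are disjoint and nonempty, every member must appear, so $|\mathscr{V}|\le\aleph_0$. $(5)\Rightarrow(4)$ is where the metric is first used: for a discrete subspace $D$, the quantity $r_p:=\tfrac12\inf\{d(p,q):q\in D,\,q\ne p\}$ is strictly positive for each $p\in D$, and the balls $B_d(p,r_p)$ form a pairwise disjoint family (if $x$ lies in $B_d(p,r_p)\cap B_d(q,r_q)$ with $p\ne q$, then $d(p,q)<r_p+r_q\le d(p,q)$, a contradiction). $(4)\Rightarrow(3)$ is trivial. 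Finally $(6)\Rightarrow(1)$ is classical: the collection $\{B_d(x,1/n):x\in D,\,n\in\mathbb{N}\}$ is a countable base whenever $D$ is countable and dense.

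The one step with content is $(3)\Rightarrow(6)$. For each $n\in\mathbb{N}$ I would invoke Zorn's Lemma to pick a maximal $\tfrac1n$-separated set $D_n\subseteq X$, meaning $d(x,y)\ge\tfrac1n$ for all distinct $x,y\in D_n$. Such a $D_n$ is discrete by construction, and is closed, because a limit point $x\notin D_n$ would yield two distinct points of $D_n$ within distance $\tfrac{1}{2n}$ of $x$, hence within distance $\tfrac1n$ of one another, contradicting separation. Hypothesis (3) then forces each $D_n$ to be countable, and maximality of $D_n$ means every $x\in X$ satisfies $d(x,D_n)<\tfrac1n$ (otherwise $D_n\cup\{x\}$ would still be $\tfrac1n$-separated). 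Consequently $D:=\bigcup_{n}D_n$ is a countable dense subset of $X$, establishing (6).

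I do not anticipate any real obstacle once Theorem \ref{Th1} is invoked; the only care needed is the verification that maximal $\tfrac1n$-separated sets are \emph{closed} discrete (so that (3) applies, not merely (4)), and the explicit construction of disjoint balls around points of a discrete subspace in $(5)\Rightarrow(4)$. Both rely only on the triangle inequality for the genuine metric $d$ supplied by Theorem \ref{Th1}, not on the $\star$-triangle inequality, so no subtle interaction with the $t$-definer $\star$ enters the argument.
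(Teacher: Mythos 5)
Your overall strategy is exactly what the paper intends: the paper gives no proof beyond the remark that the corollary follows ``obviously'' from metrizability, so invoking Theorem \ref{Th1} to obtain a genuine metric $d$ inducing $\mathscr{T}_{d^\star}$ and then verifying the classical equivalences is the right reduction, and most of your links are sound. In particular the maximal $\tfrac1n$-separated sets in $(3)\Rightarrow(6)$, the disjoint balls $B_d(p,r_p)$ in $(5)\Rightarrow(4)$, and the standard arguments for $(1)\Rightarrow(2)$ and $(6)\Rightarrow(1)$ are all correct.

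However, the link $(2)\Rightarrow(5)$ as you state it is broken. The family $\mathscr{V}\cup\{X\setminus\bigcup\mathscr{V}\}$ is not an open cover: $\bigcup\mathscr{V}$ is open, so its complement is closed and in general not open, and the Lindel\"of property says nothing about covers containing a non-open member. This is not a cosmetic slip: the implication ``Lindel\"of implies every disjoint family of nonempty open sets is countable'' is false for general topological spaces (the one-point Lindel\"ofication of an uncountable discrete space is Lindel\"of yet has an uncountable disjoint family of open singletons), so any correct argument for this arrow must actually use the metric, which yours does not at this step. Since your proof is a single cycle, this one broken arrow collapses the whole equivalence. The repair is easy: replace $(2)\Rightarrow(5)$ by $(2)\Rightarrow(6)$ (for each $n$ extract a countable subcover of the cover $\{B_d(x,\tfrac1n):x\in X\}$ and collect the centers into a countable dense set), or by $(2)\Rightarrow(1)$ (the union over $n$ of those countable subcovers is a countable base), and then derive $(5)$ from $(1)$ by choosing, inside each member of a disjoint open family, a basic open set from the countable base; distinct members receive distinct basic sets, so the family is countable. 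With that rerouting the argument is complete.
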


Recalled that a topological space $X$ is called {\it pseudo-compact} if every real valued continuous function on $X$ is bounded; $X$ is called {\it countably compact} if every countable open cover of $X$ has a finite subcover; $X$ is called {\it sequentially compact} if every sequence of points of $X$ has a convergent subsequence. A point $x$ is called {\it $\omega$-accumulation point} of $X$ if any neighborhood of point $x$ contains infinite points of $X$.

\begin{corollary} \label{col1} Let $(X,d^\star)$ be a $\star$-metric space and $X$ with the topology $\mathscr{T}_{d^\star}$ induced by $d^\star$. Then the following statements equivalent:
\begin{enumerate}
\item[(1)] X is pseudo-compact;
\item[(2)] every infinite subset of X has cluster point;
\item[(3)] every infinite subset of X has $\omega$-accumulation point;
\item[(4)] every sequence of X has cluster point;
\item[(5)] X is countably compact;
\item[(6)] X is sequentially compact;
\item[(7)] X is compact.
\end{enumerate}
\end{corollary}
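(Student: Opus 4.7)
The plan is to reduce the statement directly to the classical equivalences for metric spaces by invoking Theorem \ref{Th1}. Each of the seven conditions listed is a purely topological invariant: pseudo-compactness is defined through continuous real-valued functions, compactness and countable compactness through open covers, while the cluster-point, $\omega$-accumulation-point and sequential-compactness conditions depend only on the neighborhood filters and on convergence of sequences. Since Theorem \ref{Th1} provides a metric $d$ on $X$ inducing $\mathscr{T}_{d^\star}$, the seven conditions hold for $(X,d^\star)$ if and only if they hold for the genuine metric space $(X,d)$, so it suffices to establish the equivalences in that classical setting.

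To carry this out, I would arrange the seven conditions in a convenient cyclic chain. The easy arrows are $(7)\Rightarrow(5)\Rightarrow(1)$, where the second implication uses only the fact that continuous images of countably compact spaces into $\mathbb{R}$ are bounded, and the chain $(5)\Leftrightarrow(2)\Leftrightarrow(3)\Leftrightarrow(4)$, which holds in any $T_1$ space (an infinite subset with no $\omega$-accumulation point yields a countable discrete closed subset and thus a countable open cover without finite subcover, and vice versa). The equivalence $(5)\Leftrightarrow(6)$ holds in any first countable $T_1$ space, which $(X,\mathscr{T}_{d^\star})$ already is by the theorem of Khatami and Mirzavaziri. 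This leaves only the two implications that genuinely require metrizability: $(1)\Rightarrow(5)$ and $(6)\Rightarrow(7)$.

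For $(6)\Rightarrow(7)$, one uses the standard metric argument: sequential compactness implies both total boundedness and completeness, from which compactness follows (e.g.\ via the Lebesgue number lemma applied to an arbitrary open cover). For $(1)\Rightarrow(5)$, the contrapositive proceeds by taking, in the absence of countable compactness, a countable open cover with no finite subcover, extracting from it a discrete sequence $\{x_n\}$ together with a discrete family of open neighborhoods, and building, with the help of Urysohn-type functions (available since $(X,\mathscr{T}_{d^\star})$ is normal and metrizable), an unbounded continuous function on $X$ that witnesses the failure of pseudo-compactness.

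The main obstacle is not really mathematical but organizational: to make sure that every implication is justified by a property actually enjoyed by $(X,\mathscr{T}_{d^\star})$ after metrization (Hausdorffness, first countability, normality, paracompactness), all of which are granted either by Theorem \ref{Th1} or by the preceding results of Khatami and Mirzavaziri. Once the reduction to a metric topology is made, the proof is essentially a citation of the classical equivalence between pseudo-, countable, sequential, and Bolzano--Weierstrass compactness in metric spaces, and can be written very concisely.
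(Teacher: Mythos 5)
Your proposal is correct and follows exactly the route the paper intends: the paper gives no proof of this corollary at all, simply prefacing it with the remark that the result is immediate because every $\star$-metric space is metrizable (Theorem \ref{Th1}), which is precisely your reduction to the classical equivalences for metric spaces. Your additional sketch of the classical implications is sound and merely fills in what the paper leaves to citation.
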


\section{Total boundedness of $\star$-metric spaces}
Total boundedness is an important property in metric spaces. We generalize the concept of totally bounded into $\star$-metric spaces and study their properties. Now we need to give some related definitions.

\begin{definition}
A $\star$-metric space $(X,d^\star)$ is {\it totally bounded}, if for any $\epsilon >0$ there exists a finite set $F(\epsilon)\subseteq X$ such that $X=\bigcup_{x\in F(\epsilon)}B_{d^\star}(x,\epsilon)$. We also said that the finite set $F(\epsilon)$ is
{\it $\epsilon$-dense} in $X$.
\end{definition}

\begin{theorem}\label{thm3} Let $(X,d^\star)$ be a $\star$-metric space and every infinite subset of $X$ have an $\omega$-accumulation point in the topological space $X$ with the topology induced by $d^ \star$. Then $(X,d^\star)$ is totally bounded.
\end{theorem}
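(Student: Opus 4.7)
The plan is to argue by contradiction, mimicking the classical proof for metric spaces but replacing the ordinary triangle inequality with the $\star$-triangle inequality and appealing to Lemma \ref{lem3} whenever we need to bound an expression of the form $r \star r$.

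First I would suppose $(X,d^\star)$ is not totally bounded, so there is some $\epsilon>0$ admitting no finite $\epsilon$-dense subset. Using this, I would inductively construct a sequence $(x_n)_{n\in\mathbb{N}}$ of points of $X$ with $d^\star(x_n,x_m)\geqslant\epsilon$ whenever $n\neq m$: having chosen $x_1,\dots,x_k$, the set $\{x_1,\dots,x_k\}$ fails to be $\epsilon$-dense, so some $x_{k+1}\in X$ lies outside $\bigcup_{i\leqslant k}B_{d^\star}(x_i,\epsilon)$. Since the points are pairwise at $d^\star$-distance at least $\epsilon>0$, they are pairwise distinct by (M1), so $A=\{x_n:n\in\mathbb{N}\}$ is infinite.

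Next, by hypothesis $A$ has an $\omega$-accumulation point $y\in X$. I would then invoke Lemma \ref{lem3} with $r=\epsilon$ to obtain $r_1>0$ such that $[0,r_1)\star[0,r_1)\subseteq[0,\epsilon)$. Since $y$ is an $\omega$-accumulation point of $A$, the open neighborhood $B_{d^\star}(y,r_1)$ contains infinitely many points of $A$; in particular we can choose two distinct indices $n\neq m$ with $x_n,x_m\in B_{d^\star}(y,r_1)$. The $\star$-triangle inequality (M3) together with (M2) and the monotonicity (T3) then give
\[
d^\star(x_n,x_m)\leqslant d^\star(x_n,y)\star d^\star(y,x_m) < r_1\star r_1 <\epsilon,
\]
contradicting the defining property $d^\star(x_n,x_m)\geqslant\epsilon$ of our sequence. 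This contradiction forces $(X,d^\star)$ to be totally bounded.

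The only non-routine ingredient is step two of the above, namely ensuring that small $\star$-combinations of small numbers stay small; this is precisely what Lemma \ref{lem3} supplies, and it plays the role that the bound $r+r=2r$ plays in the ordinary metric proof. The main obstacle, if any, is being careful that the inductive construction of $(x_n)$ really produces infinitely many distinct points and that we do not inadvertently need a stronger continuity property of $\star$ than (T5); but since we only need a single application of Lemma \ref{lem3} at the fixed value $r=\epsilon$, no further analytic input is required.
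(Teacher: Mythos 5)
Your proposal is correct and follows essentially the same route as the paper's own proof: contradiction, inductive construction of an $\epsilon$-separated infinite set, an $\omega$-accumulation point, and Lemma \ref{lem3} to shrink the radius so that the $\star$-triangle inequality yields two points at distance less than $\epsilon$. The only cosmetic difference is that you invoke the interval form $[0,r_1)\star[0,r_1)\subseteq[0,\epsilon)$ where the paper writes $\epsilon_1\star\epsilon_1<\epsilon_0$; your phrasing is in fact the more careful one.
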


\begin{proof}
Suppose the contrary that there exists $\epsilon_{0}$, but there is no finite set $F(\epsilon_{0})$, so that $X=\bigcup_{x\in F(\epsilon_{0})}B_{d^\star}(x,\epsilon_{0})$ holds. Take any $x_{1}\in X$, then $X\neq B_{d^\star}(x_{1},\epsilon_{0})$; take any $x_{2}\in X-B_{d^\star}(x_{1},\epsilon_{0})$, since $X\neq \bigcup_{i=1}^2 B_{d^\star}(x_{i},\epsilon_{0})$; Repeat this procedure, we obtain an infinite set $\{x_{1},x_{2},\dots ,x_{n},\dots\}$. According to the above operation, we can get that $d^\star(x_{i},x_{j})\geqslant \epsilon_{0}$ $(i\neq j)$. By assuming, this infinite set has an $\omega$-accumulation point $x_{0}\in X$. Thus, by Lemma \ref{lem3}, take an $\epsilon_{1}> 0$ such that $\epsilon_{1}\star \epsilon_{1}<\epsilon_{0}$. Then the open-ball $B_{d^\star}(x_{0},\epsilon_{1})$ should contain infinite points of $\{x_{1},x_{2},\dots ,x_{n},\dots\}$. Let $x_{n},x_{m}\in B_{d^\star}(x_{0},\epsilon_{1})$, then $$d^\star(x_{m}, x_{n}) \leqslant d^\star(x_{m}, x_{0})\star d^\star(x_{0}, x_{n})< \epsilon_{1}\star \epsilon_{1}< \epsilon_{0}.$$ This contradicts $d^\star(x_{i},y_{j})\geqslant \epsilon_{0}$. Therefore $(X,d^\star),$ is totally bounded.
\end{proof}

By Corollary \ref{col1} and Theorem \ref{thm3}, we have the following corollary:

\begin{corollary} \label{col2}
 Let $(X,d^\star)$ be a $\star$-metric space. If $X$ with the topology induced by $d^ \star$ is countably compact, then  $(X,d^\star)$ is a totally bounded $\star$-metric space.
\end{corollary}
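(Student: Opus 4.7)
The plan is to obtain the conclusion by chaining the two results named in the statement, with no additional construction required. Starting from the hypothesis that $(X,\mathscr{T}_{d^\star})$ is countably compact, I would first invoke Corollary \ref{col1}: among the listed equivalences, countable compactness (item (5)) is equivalent to the property that every infinite subset of $X$ has an $\omega$-accumulation point (item (3)). Thus the hypothesis needed to apply Theorem \ref{thm3} is automatically satisfied.

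Second, I would apply Theorem \ref{thm3} directly to conclude that $(X,d^\star)$ is totally bounded. So the whole argument is essentially a two-line deduction: "countably compact $\Rightarrow$ every infinite subset has an $\omega$-accumulation point (by Corollary \ref{col1}) $\Rightarrow$ totally bounded (by Theorem \ref{thm3})."

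There is no real obstacle here; the statement is a bookkeeping corollary rather than a theorem requiring fresh ideas. The only point worth emphasising in the write-up is that the equivalences of Corollary \ref{col1} are themselves legitimate to cite because they rested on the metrization theorem (Theorem \ref{Th1}) already proved earlier in the paper, so invoking the equivalence "countably compact $\Leftrightarrow$ every infinite set has an $\omega$-accumulation point" is not circular. Accordingly I would keep the proof to a single short paragraph that names the two results being composed.
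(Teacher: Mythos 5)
Your proposal is correct and matches the paper exactly: the authors justify this corollary precisely by combining Corollary \ref{col1} (countable compactness is equivalent to every infinite subset having an $\omega$-accumulation point, legitimate since $X$ is metrizable by Theorem \ref{Th1}) with Theorem \ref{thm3}. Nothing further is needed.
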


One can easily verify that for every subset $M\subseteq X$ of a $\star$-metric space $(X,d^\star)$, $(M, d^\star\mid_{M\times M})$ is a $\star$-metric space, where $d^\star\mid_{M\times M}$ is the restriction of the $\star$-metric $d^\star$ on $X$ to the subset $M$. The $\star$-metric space $(M, d^\star\mid_{M\times M})$ will also be denoted by $(M,d^\star)$.


\begin{theorem}\label{thm2} Let $(X,d^\star)$ be a totally bounded $\star$-metric space. Then for every subset $M$ of $X$ the $\star$-metric space $(M,d^\star)$ is totally bounded.
\end{theorem}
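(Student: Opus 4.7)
The plan is to adapt the classical metric-space argument, with Lemma \ref{lem3} taking over the role of the triangle inequality in the form $\frac{\epsilon}{2} + \frac{\epsilon}{2} = \epsilon$. Given $\epsilon > 0$, I would first apply Lemma \ref{lem3} to produce an $r > 0$ such that $[0,r) \star [0,r) \subseteq [0,\epsilon)$. This $r$ is the right replacement for $\epsilon/2$ in the usual proof.

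Next, since $(X, d^\star)$ is totally bounded, there is a finite $r$-dense set $F(r) \subseteq X$, i.e.\ $X = \bigcup_{x \in F(r)} B_{d^\star}(x, r)$. From this I would build a finite $\epsilon$-net for $M$ as follows: let $F'(r) = \{x \in F(r) : B_{d^\star}(x, r) \cap M \neq \emptyset\}$, and for each $x \in F'(r)$ choose a single point $m_x \in B_{d^\star}(x, r) \cap M$. The candidate $\epsilon$-dense set in $M$ is then $G(\epsilon) = \{m_x : x \in F'(r)\}$, which is finite because $F'(r)$ is.

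To verify $\epsilon$-density in $M$: for any $y \in M \subseteq X$ there exists $x \in F(r)$ with $d^\star(y, x) < r$, and then $x \in F'(r)$ because $y$ witnesses the nonempty intersection. By construction $d^\star(x, m_x) < r$, so by (M2), (M3), and the choice of $r$,
\begin{equation*}
d^\star(y, m_x) \leqslant d^\star(y, x) \star d^\star(x, m_x) < r \star r < \epsilon,
\end{equation*}
where the last inequality uses $r \star r \in [0,r) \star [0,r) \subseteq [0,\epsilon)$. Hence $y \in B_{d^\star}(m_x, \epsilon)$, and $M = \bigcup_{m \in G(\epsilon)} \bigl(B_{d^\star}(m, \epsilon) \cap M\bigr)$ as required.

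The only nontrivial point in the whole argument is ensuring that the composition $r \star r$ stays below $\epsilon$, which is precisely what Lemma \ref{lem3} provides by exploiting (T4) and (T5); everything else is bookkeeping. So the main obstacle is already dissolved by the continuity-at-$(0,0)$ lemma, and the remaining task is just to marshal the monotonicity (T3) and symmetry (M2) to push the standard metric-space proof through without change.
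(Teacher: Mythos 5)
Your proposal is correct and follows essentially the same route as the paper: use Lemma \ref{lem3} to replace $\epsilon/2$ by an $r$ with $[0,r)\star[0,r)\subseteq[0,\epsilon)$, keep only those centers of the finite $r$-net whose balls meet $M$, replace each by a nearby point of $M$, and close with one application of (M3). The only cosmetic difference is that you are more explicit than the paper about the chosen points lying in $M$; note also that the justification should read ``$d^\star(y,x)\star d^\star(x,m_x)\in[0,r)\star[0,r)\subseteq[0,\epsilon)$'' rather than ``$r\star r\in[0,r)\star[0,r)$,'' a harmless slip the paper itself also makes.
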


\begin{proof} Let $(X,d^\star)$ be a $\star$-metric space, $M\subset X$. For every $\epsilon> 0$, by Lemma \ref{lem3}, take an $\epsilon_{1}> 0$ such that $\epsilon_{1}\star \epsilon_{1}<\epsilon$. Take a finite set $F(\epsilon_{1})=\{x_{1},x_{2},\dots ,x_{k}\}$  which is $\epsilon_{1}$-dense in $X$. Let $\{x_{m_{1}},x_{m_{2}},\dots ,x_{m_{l}}\}$ be the subset of $F(\epsilon_{1})$ consisting of all points which satisfy that $d^\star(x,x_{i})< \epsilon_{1}$, for each $x\in M$ and $x_{i}\in F(\epsilon_{1})$.
Let $F'=\{x'_{1},x'_{2},\dots ,x'_{l}\}$ be a finite set satisfying $d^\star(x'_{j},x_{m_{j}})< \epsilon_{1}$ for $j\leqslant l$. We shall show that the set $F'$ is $\epsilon$-dense in $M$. Let $x\in M$, there exists $x_{i}\in F(\epsilon_{1})$ such that $d^\star(x,x_{i})< \epsilon_{1}$. Hence $x_{i}=x_{m_{j}}$ for some $j\leqslant l$, then we have $$d^\star(x, x'_{j}) \leqslant d^\star(x, x_{m_{j}})\star d^\star(x_{m_{j}}, x'_{j})< \epsilon_{1}\star \epsilon_{1}< \epsilon.$$
\end{proof}

\begin{theorem}\label{thm2.2} Let $(X,d^\star)$ be a $\star$-metric space and for every subset $M$ of $X$ the space  $(M,d^\star)$ is totally bounded if and only if $(\overline{M},d^\star)$ is totally bounded.
\end{theorem}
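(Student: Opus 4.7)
The statement is a biconditional; I will handle the two implications separately. The easy direction, $(\overline{M},d^\star)$ totally bounded implies $(M,d^\star)$ totally bounded, follows at once from Theorem \ref{thm2}: since $M \subseteq \overline{M}$ and $(\overline{M},d^\star)$ is itself a $\star$-metric space, every subset of $\overline{M}$, and in particular $M$, inherits total boundedness.

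For the forward direction, suppose $(M,d^\star)$ is totally bounded and fix $\epsilon > 0$. First I invoke Lemma \ref{lem3} to choose $\epsilon_1 > 0$ with $[0,\epsilon_1) \star [0,\epsilon_1) \subseteq [0,\epsilon)$. Then the total boundedness of $(M,d^\star)$ produces a finite set $F=\{x_1, \ldots, x_k\} \subseteq M$ which is $\epsilon_1$-dense in $M$. I claim this same $F$ is $\epsilon$-dense in $\overline{M}$; note $F \subseteq M \subseteq \overline{M}$, as required. Given $y \in \overline{M}$, the collection $\{B_{d^\star}(y, 1/n) : n \in \mathbb{N}\}$ forms a local neighborhood base at $y$ in $\mathscr{T}_{d^\star}$ (this is implicit in the proof of Theorem \ref{Th1}), so for $n$ large enough that $1/n < \epsilon_1$ the ball $B_{d^\star}(y, 1/n)$ meets $M$ at some point $x$. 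Then $x \in B_{d^\star}(x_i, \epsilon_1)$ for some $i \leqslant k$, and the $\star$-triangle inequality together with the choice of $\epsilon_1$ yields
\[
d^\star(y, x_i) \leqslant d^\star(y, x) \star d^\star(x, x_i) < \epsilon_1 \star \epsilon_1 < \epsilon,
\]
so $\overline{M} \subseteq \bigcup_{i=1}^k B_{d^\star}(x_i, \epsilon)$, completing the argument.

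The only step requiring real care is the approximation of $y \in \overline{M}$ by a point of $M$ in $d^\star$ rather than merely in the topology; this rests on the fact that each open ball $B_{d^\star}(y, r)$ is itself an open set in $\mathscr{T}_{d^\star}$, and thus a neighborhood of $y$. That fact is an elementary consequence of the continuity axiom (T5) combined with the $\star$-triangle inequality, via essentially the same mechanism as Lemma \ref{lem3}. Beyond this point, the proof is a direct transcription of the classical metric argument, the sole structural modification being that the ordinary triangle inequality is replaced by its $\star$-version, with Lemma \ref{lem3} absorbing the $\star$-composition $\epsilon_1 \star \epsilon_1$ back below the target $\epsilon$.
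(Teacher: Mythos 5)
Your proposal is correct and follows essentially the same route as the paper: the converse direction via Theorem \ref{thm2}, and the forward direction by taking an $\epsilon_1$-dense finite subset of $M$ with $\epsilon_1\star\epsilon_1<\epsilon$ from Lemma \ref{lem3} and transferring it to $\overline{M}$ through the $\star$-triangle inequality. The only difference is cosmetic: you justify explicitly that $B_{d^\star}(y,\epsilon_1)\cap M\neq\emptyset$ for $y\in\overline{M}$ via the neighborhood base of balls, a step the paper simply asserts.
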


\begin{proof}
Assume that $(M,d^\star)$ is totally bounded. For $\epsilon> 0$, Lemma \ref{lem3}, take an $\epsilon_{1}> 0$ such that $\epsilon_{1}\star \epsilon_{1}<\epsilon$, and take a finite set $F(\epsilon_{1})=\{x_{1},x_{2},\dots ,x_{k}\}$  which is $\epsilon_{1}$-dense in $M$. For each $x\in \overline{M}$ , we have $B_{d^\star}(x,\epsilon_{1})\cap M\neq \emptyset $. Take $y\in B_{d^\star}(x,\epsilon_{1})\cap M$ there exists $x_{i}\in F(\epsilon_{1})$ such that $d^\star(y,x_{i})< \epsilon_{1}$, then we have $d^\star(x, x_{i}) \leqslant d^\star(x, y)\star d^\star(y, x_{i})< \epsilon_{1}\star \epsilon_{1}< \epsilon.$

On the other hand, assume that $(\overline{M},d^\star)$ is totally bounded. One can easily get that $(M,d^\star)$ is totally bounded by Theorem \ref{thm2}, because $M$ is the subset of $\overline{M}$.
\end{proof}

\begin{corollary}
If the $\star$-metric space $(X,d^\star)$ has a dense totally bounded subspace, then $(X,d^\star)$ is totally bounded.
\end{corollary}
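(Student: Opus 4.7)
The plan is to read this off directly from Theorem \ref{thm2.2}. Suppose $M \subseteq X$ is a dense subspace such that $(M, d^\star)$ is totally bounded. Density of $M$ in $X$ (with respect to the topology $\mathscr{T}_{d^\star}$) means precisely that $\overline{M} = X$. Theorem \ref{thm2.2} asserts the equivalence of total boundedness of $(M, d^\star)$ and total boundedness of $(\overline{M}, d^\star)$, so the hypothesis immediately gives that $(\overline{M}, d^\star) = (X, d^\star)$ is totally bounded.

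There is essentially no obstacle here; the work has already been done in Theorem \ref{thm2.2}. The only subtlety worth mentioning is that the closure $\overline{M}$ referenced in Theorem \ref{thm2.2} must be taken in the topology $\mathscr{T}_{d^\star}$ induced by $d^\star$, which is exactly the topology with respect to which ``dense'' is being used in the corollary's hypothesis; so the identification $\overline{M} = X$ is legitimate. Thus the proof is a one-line invocation of the preceding theorem.
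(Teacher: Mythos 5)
Your proof is correct and is exactly the argument the paper intends: the corollary is stated as an immediate consequence of Theorem \ref{thm2.2}, obtained by noting that density of $M$ means $\overline{M}=X$ in the topology $\mathscr{T}_{d^\star}$. Your remark that the closure must be taken in $\mathscr{T}_{d^\star}$ is a sensible clarification, but there is no substantive difference from the paper's route.
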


 Let $\{(X_{i},d_{i}^\star)\}_{i=1}^n $ be a family of finite nonempty $\star$-metric spaces. Consider the Cartesian product $X=\prod_{i=1}^{n}X_{i}$ and for every pair $x=(x_i)_{i\leq 1\leq n}$,  $y=(y_i)_{i\leq 1\leq n}$ of points of $X$ let
 $$d_{T}^\star(x,y)=d_{1}^\star(x_{1},y_{1})\star d_{2}^\star(x_{2},y_{2})\star \dots \star d_{n}^\star(x_{n},y_{n})\quad\quad\quad\quad\quad\quad(3.1)$$
 and
 $$d_{\max}^\star(x,y)=\max_{1\leqslant i\leqslant n} d_{i}^\star(x_{i},y_{i})\quad\quad\quad\quad\quad\quad\quad\quad\quad\quad\quad\quad\quad\quad\quad\quad(3.2).$$
In \cite[Theorem 4.3]{AG}, Khatami and Mirzavaziri proved that the formulas (3.1) and (3.2) define two $\star$-metrics on the Cartesian product $X=\prod_{i=1}^{n}X_{i}$. Furthermore the induced topology of these two $\star$-metrics on $X$ is the same as the product topology on $X$.

\begin{theorem}\label{thm4} Let $\{(X_{i},d_{i}^\star)\}_{i=1}^n $ be a family of finite nonempty $\star$-metric spaces and $X=\prod_{i=1}^{n}X_{i}$ the Cartesian product. Then:
\begin{enumerate}
\item[(1)] $X$ with the $\star$-metric $d_{T}^\star$ defined by formula (3.1) is totally bounded if and only if all $\star$-metric spaces $(X_{i},d_{i}^\star)$ are totally bounded;

\item[(2)] $X$ with the $\star$-metric $d_{max}^\star$ defined by formula (3.2) is totally bounded if and only if all $\star$-metric spaces $(X_{i},d_{i}^\star)$ are totally bounded.
\end{enumerate}
\end{theorem}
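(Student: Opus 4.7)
The plan is to prove both parts together by exploiting that each projection $\pi_i : X \to X_i$ is non-expansive under both $\star$-metrics on $X$. For the ``only if'' direction, note that by (T4), $a \star 0 = a$, so substituting $0$ for every coordinate $j \neq i$ and invoking (T1) and (T3) for monotonicity yields
$$d_i^\star(\pi_i(x),\pi_i(y)) \le d_T^\star(x,y), \qquad d_i^\star(\pi_i(x),\pi_i(y)) \le d_{\max}^\star(x,y),$$
for all $x,y \in X$. Since each $X_i$ is nonempty, $\pi_i$ is surjective, so if $F \subseteq X$ is a finite $\epsilon$-dense set under $d_T^\star$ (resp.\ $d_{\max}^\star$), then $\pi_i(F)$ is a finite $\epsilon$-dense subset of $(X_i, d_i^\star)$, proving each factor is totally bounded.

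For the ``if'' direction under $d_{\max}^\star$, given $\epsilon > 0$, I would pick a finite $\epsilon$-dense set $F_i \subseteq X_i$ for each $i$ and form the product $F = F_1 \times \cdots \times F_n$. For any $y = (y_1,\ldots,y_n) \in X$, selecting $x_i \in F_i$ with $d_i^\star(x_i,y_i) < \epsilon$ gives a point $x \in F$ with $d_{\max}^\star(x,y) = \max_i d_i^\star(x_i,y_i) < \epsilon$, so $F$ is $\epsilon$-dense in $X$.

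The ``if'' direction under $d_T^\star$ is the main obstacle, because the natural bound on $d_T^\star(x,y)$ is an $n$-fold $\star$-combination rather than a single application of $\star$. The remedy is to iterate Lemma \ref{lem3}: setting $r_0 := \epsilon$, successive applications produce $r_0 \ge r_1 \ge \cdots \ge r_{n-1} > 0$ with $[0,r_{k+1}) \star [0,r_{k+1}) \subseteq [0,r_k)$. A short induction on $n$ using associativity (T2), together with the monotonicity of $\star$ in each coordinate (obtained by combining (T1) and (T3)), then yields
$$\underbrace{[0,r_{n-1}) \star \cdots \star [0,r_{n-1})}_{n \text{ copies}} \subseteq [0,\epsilon).$$
Choosing finite $r_{n-1}$-dense sets $F_i \subseteq X_i$ and repeating the product construction gives an $\epsilon$-dense $F$ in $(X, d_T^\star)$. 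Once this iterated modulus is in place, the argument closely parallels the classical metric-space proof, so the bookkeeping of the nested radii $r_0,\ldots,r_{n-1}$ is the only nontrivial step.
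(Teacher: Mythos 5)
Your proposal is correct and follows essentially the same route as the paper: the necessity direction rests on the same inequality $d_i^\star(x_i,y_i)\leqslant d_T^\star(x,y)$ coming from (T4) (the paper phrases it via the slice $X_m^*$ being isometric to $X_m$ together with Theorem \ref{thm2}, you via non-expansive surjective projections), and the sufficiency direction uses the identical product construction $F=\prod_{i=1}^{n}F_i$. Your explicit iteration of Lemma \ref{lem3} to produce the nested radii $r_0\geqslant\cdots\geqslant r_{n-1}$ with $n$-fold $\star$-combination below $\epsilon$ is a welcome detail that the paper asserts in one line without justification.
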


\begin{proof}
(1). Necessity. Assume that the $\star$-metric space $(X,d_{T}^\star)$ is totally bounded. The subset $X_{m}^*=\prod_{i=1}^{n}A_{i}$ of $X$ , where $A_{m}=X_{m}$ and $A_{i}=\{x_{i}^*\}$ is a one-point subset of $X_{i}$ for $i \neq m$. Then the subspace $X_{m}^*$ is  totally bounded by Theorem \ref{thm2}. One can easily verify that $p_{m}^*=p_{m}\mid _{X_{m}^*}:X_{m}^*\rightarrow X_{m}$ is a isometric isomorphism and according to the definition of $d_{T}^\star$, for $x^*,y^*\in X_{m}^*\subset X$, $d_{T}^\star(x^*,y^*)=d_{m}^\star(p_{m}(x^*),p_{m}(y^*))$. Therefore, if a finite set $F$ is $\epsilon$-dense in $(X_{m}^*,d_{T}^\star)$, then $p_{m}(F)$ is $\epsilon$-dense in $(X_{m},d_{T}^\star)$, and from this it follows further that $(X_{m},d_{T}^\star)$ is totally bounded.

Sufficiency. Let every $(X_{i},d_{i}^\star)$ is totally bounded. For $\epsilon> 0$, by Lemma \ref{lem3}, take an $\epsilon_{1}> 0$ such that $ \overbrace{\epsilon_{1}\star \epsilon_{1}\star \dots \star \epsilon_{1}  }^{n \text{ times}}<\epsilon$. For every $i\leqslant n$ take a finite set $F_{i}$ which is $\epsilon_{1}$-dense in $X_{i}$. We define that $$F=\prod_{i=1}^{n}F_{i},$$ then $F$ is a finite set. To conclude the proof it suffices to show that $F$ is $\epsilon$-dense in the space $(X,d_{T}^\star).$ Let $x=(x_{1},x_{2},\dots ,x_{n})$ be an arbitrary point of $X$. For every $i\leqslant n$, since $F_{i}$ is $\epsilon$-dense in $X_{i}$, there exists a $y_{i} \in F_{i}$ such that $d_{i}^\star(x_{i},y_{i})< \epsilon_{1}$ and take a point $y=(y_{1},y_{2},\dots ,y_{n}) \in F$ we have $$d_{T}^\star(x,y)=d_{1}^\star(x_{1},y_{1})\star d_{2}^\star(x_{2},y_{2})\star \dots \star d_{n}^\star(x_{n},y_{n})<\overbrace{\epsilon_{1}\star \epsilon_{1}\star \dots \star \epsilon_{1}  }^{n \text{ times}}<\epsilon.$$ By the foregoing, $F$ is $\epsilon$-dense in $(X,d_{T}^\star)$.

(2). Necessity. Assume that the $\star$-metric space $(X,d_{max}^\star)$ is totally bounded. Then the proof method is the same as that of necessity in (1).

Sufficiency. Let every $(X_{i},d_{i}^\star)$ is  totally bounded. For $\epsilon> 0$, take a finite set $F_{i}$  which is $\epsilon$-dense in $X_{i}$, for every $i\leqslant n$. We define that $$F=\prod_{i=1}^{n}F_{i},$$ then $F$ is a finite set. To conclude the proof it suffices to show that $F$ is $\epsilon$-dense in the space $(X,d_{\max}^\star)$. Let $x=(x_{1},x_{2},\dots ,x_{n})$ be an arbitrary point of $X$. For every $i\leqslant n$, there exists a $y_{i}\in F_{i}$ such that $d_{i}^\star(x_{i},y_{i})< \epsilon$ and  take a point $y=(y_{1},y_{2},\dots ,y_{n}) \in F$. Without loss of generality, choose $\max_{1\leqslant i\leqslant n} d_{i}^\star(x_{i},y_{i})=d_{k}^\star(x_{k},y_{k})$, then we have $$d_{\max}^\star(x,y)=\max_{1\leqslant i\leqslant n}d_{i}^\star(x_{i},y_{i})=d_{k}^\star(x_{k},y_{k})< \epsilon.$$ By the foregoing, $F$ is $\epsilon$-dense in $(X,d_{\max}^\star)$.

This completes the proof.
\end{proof}





Let $(X,d)$ be a metric space. Define $\tilde{d}(x,y)=\text{min}\{1,d(x,y)\}$ for each $x,y\in X$. It is well known that $\tilde{d}$ is a metric on $X$ such that the topology induced by $\tilde{d}$ is the same as induced by $d$. For $\star$-metric space, we have the following result.

\begin{proposition}\label{thm1} Let $(X,d^\star)$ be a $\star$-metric space and define $ \tilde{d^\star}(x,y)=\text{min}\{1,d^\star(x,y)\}$ for each $x,y\in X$. Then $(X,\tilde{d^\star})$ is also a $\star$-metric space on $X$. Furthermore the topology induced by $\tilde{d^{\star}}$ on $X$ is the same as induced by $d^{\star}$.
\end{proposition}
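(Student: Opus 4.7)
The strategy is to show that the same $t$-definer $\star$ that works for $d^\star$ also works for $\tilde{d^\star}$, so that $(X,\tilde{d^\star})$ is a $\star$-metric space with respect to the original operation. Conditions (M1) and (M2) for $\tilde{d^\star}$ are immediate from the corresponding properties of $d^\star$, because truncation by $1$ preserves both symmetry and the vanishing-iff-equal property.

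The heart of the proof is the $\star$-triangle inequality (M3) for $\tilde{d^\star}$. I would split into cases according to whether $d^\star(x,z)$ and $d^\star(z,y)$ lie in $[0,1]$. If both do, then $\tilde{d^\star}(x,z)\star\tilde{d^\star}(z,y)=d^\star(x,z)\star d^\star(z,y)\geqslant d^\star(x,y)\geqslant \tilde{d^\star}(x,y)$. Otherwise at least one of $\tilde{d^\star}(x,z)$, $\tilde{d^\star}(z,y)$ equals $1$; here the key auxiliary fact (the only real subtlety in the argument) is that
\[
1\star c \;=\; c\star 1 \;\geqslant\; 0\star 1 \;=\; 1\star 0 \;=\; 1
\]
for every $c\geqslant 0$, which follows from (T1), (T3), (T4). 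Combining this with the monotonicity (T3) applied to the remaining coordinate gives $\tilde{d^\star}(x,z)\star\tilde{d^\star}(z,y)\geqslant 1\geqslant \tilde{d^\star}(x,y)$, so (M3) holds in every case.

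For the topological equivalence, I would observe that for any $x\in X$ and any $r\in(0,1]$, the two open balls agree:
\[
B_{\tilde{d^\star}}(x,r)\;=\;B_{d^\star}(x,r).
\]
The inclusion $\subseteq$ uses that $\tilde{d^\star}(x,y)<r\leqslant 1$ forces $\tilde{d^\star}(x,y)<1$, so $\tilde{d^\star}(x,y)=d^\star(x,y)<r$; the reverse inclusion is trivial since $\tilde{d^\star}\leqslant d^\star$. Because $\{B_{d^\star}(x,\tfrac{1}{n}):n\in\mathbb{N}\}$ (respectively $\{B_{\tilde{d^\star}}(x,\tfrac{1}{n}):n\in\mathbb{N}\}$) forms a neighborhood base at $x$ in $\mathscr{T}_{d^\star}$ (respectively $\mathscr{T}_{\tilde{d^\star}}$), the identification of these bases gives $\mathscr{T}_{d^\star}=\mathscr{T}_{\tilde{d^\star}}$.

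The only place where one has to be careful is isolating the inequality $1\star c\geqslant 1$; without it, truncation could in principle destroy the $\star$-triangle inequality on a general $t$-definer (for example, for the maximum $t$-definer $1\star c=\max\{1,c\}$ is still $\geqslant 1$, and for the Lukasiewicz $t$-definer $1\star c=1+c\geqslant 1$, so the axioms (T1), (T3), (T4) really are exactly what is needed). Beyond that observation, the proof is a straightforward case analysis and a one-line comparison of neighborhood bases.
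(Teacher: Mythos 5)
Your proof is correct and follows essentially the same route as the paper's: both arguments hinge on the key inequality $1\star c\geqslant 1\star 0=1$ (the paper merely packages your case analysis as a proof by contradiction), and both establish the topological equivalence by observing that the open balls of radius at most $1$ for $d^{\star}$ and $\tilde{d^{\star}}$ coincide.
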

\begin{proof}
We shall verify that $\tilde{d^\star}$ is a $\star$-metric. Clearly, $\tilde{d^\star}$ satisfies (M1) and (M2) in the definition \ref{def2}. Suppose the contrary that exists $x,y,z\in X$ such that $$1\geqslant \tilde{d^\star} (x,z) > \tilde{d^\star} (x,y) \star \tilde{d^\star} (y,z),$$
then, $\tilde{d^\star} (x,y)<1$, $\tilde{d^\star} (y,z)<1$ (since, if $\tilde{d^\star}(x,y)\geqslant 1$, then $ \tilde{d^\star} (x,y)\star \tilde{d^\star} (y,z)\geqslant 1\star 0=1$, i.e $\tilde{d^\star} (x,z)>1$). Therefore $$\tilde{d^\star} (x,y)\star \tilde{d^\star}(y,z)=d^\star(x,y) \star d^\star(y,z)\geqslant d^\star(x,z).$$ This implies that $\tilde{d^\star} (x,z)>d^\star(x,z)$, which is a contradiction with $\tilde{d^\star} (x,z) \leqslant d^\star(x,z)$. Thus $(X,\tilde{d^\star})$ is a $\star$-metric space.

 For any $\epsilon >0$, $x\in X$ we define $$B_{d^\star}(x,\epsilon)=\{y \in X:d^\star(x,y)< \epsilon\},$$ $$B_{\tilde{d^\star}}(x,\epsilon)=\{y \in X:d^\star(x,y)< \epsilon\}.$$
When $0< \epsilon <1$, $B_{d^\star}(x,\epsilon)=B_{\tilde{d^\star}}(x,\epsilon)$. Thus the topology induced by $\tilde{d^{\star}}$ on $X$ is the same as induced by $d^{\star}$. This completes the proof.
\end{proof}


Let $\{(X_{\alpha},d^\star_{\alpha})\}_{\alpha \in A}$ be a family of $\star$-metric spaces and $X=\bigoplus_{\alpha\in A}X_{\alpha}$ be the disjoint union of $\{X_{\alpha}\}_{\alpha \in A}$. By Proposition \ref{thm1}, one can suppose that $d^\star_{\alpha}(x,y)\leqslant 1$ for $x,y\in X_{\alpha}$ and $\alpha \in A$. For every $x,y\in X$, we define
$$d^\star_{q}(x,y)=
\begin{cases}
d^\star_{\alpha}(x,y),& \text{if $x,y\in X_{\alpha}$ for some $\alpha \in A$},\\
1,& \text{otherwise}.\quad\quad\quad\quad\quad\quad\quad\quad\quad\quad\quad\quad\quad\quad\quad(3.3)
\end{cases}$$
Then $(X,d^\star_{q})$ is a $\star$-metric space.

Obviously, $d^\star_{q}$ satisfies conditions (M1) and (M2). It remains to show that condition (M3) $d^\star_{q}(x,z)\leqslant d^\star_{q}(x,y)\star d^\star_{q}(y,z)$ is also satisfied. Otherwise, if there exists $x,y,z\in X$, such that $1\geqslant d^\star_{q}(x,z)> d^\star_{q}(x,y)\star d^\star_{q}(y,z)$, then $d^\star_{q}(x,y)< 1$, $d^\star_{q}(y,z)<1$. Since, if $d^\star_{q}(x,y)\geqslant 1$, then $d^\star_{q}(x,y)\star d^\star_{q}(y,z)\geqslant 1\star 0=1$, i.e. $d^\star_{q}(x,z)>1$. This implies that $d^\star_{q} (x,z)>d^\star_{\alpha}(x,z)$, which is a contradiction with $d^\star_{q}(x,z) \leqslant d^\star_{\alpha}(x,z)$. Thus, there exists $\alpha \in A$ such that $x,y,z\in X_{\alpha}$, then we have $$d^\star_{q}(x,y)\star d^\star_{q}(y,z)=d^\star_{\alpha}(x,y)\star d^\star_{\alpha}(y,z)\geqslant d^\star_{\alpha}(x,z)=d^\star_{q}(x,z),$$ contradiction.

One can easily show that for every ${\alpha\in A}$, the set $X_{\alpha}$ is open in the space $X$ with the topology induced by $d^\star_{q}$. Since $d^\star_{\alpha}$ induces the topology on $X_{\alpha}$, then $d^\star_{q}$ induces on $X$ the topology of the disjoint union of topological spaces $\{X_{\alpha}\}_{\alpha\in A}$.

\begin{theorem}\label{thm5.2}
Let $\{(X_{i},d_{i}^\star)\}_{i=1}^n$ be a family of $\star$-metric spaces such that the metric $d_{i}^\star$ is bounded by $1$ for $1\leqslant i\leqslant n$, and $X=\bigoplus_{1\leqslant i\leqslant n} X_{i}$ the disjoint union of $\{X_{i}\}_{i\leq n}$. Then $(X,d^\star_{q})$ is totally bounded if and only if all spaces $(X_{i},d_{i}^\star)$ are totally bounded, where $d^\star_{q}$ is defined as the formula $(3.3)$.
\end{theorem}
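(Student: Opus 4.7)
\textbf{Plan for Theorem \ref{thm5.2}.} The two directions of the equivalence can be handled by very different machinery, and both are expected to be straightforward; the proof does not require Lemma \ref{lem3} (unlike Theorem \ref{thm4}) because the $\star$-operation never gets invoked in combining approximations from distinct summands.

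\textbf{Necessity.} Assuming $(X,d_q^\star)$ is totally bounded, I would fix $i$ and view $X_i$ as a subset of $X$. By the very definition of $d_q^\star$ in formula (3.3), the restriction $d_q^\star\!\mid_{X_i\times X_i}$ coincides with $d_i^\star$. Therefore the $\star$-metric subspace $(X_i,d_i^\star)$ is literally the subspace $(X_i,d_q^\star\!\mid_{X_i\times X_i})$ of $(X,d_q^\star)$, and Theorem \ref{thm2} immediately yields that $(X_i,d_i^\star)$ is totally bounded.

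\textbf{Sufficiency.} Assuming each $(X_i,d_i^\star)$ is totally bounded, I would fix $\epsilon>0$ and, for each $i\leq n$, choose a finite $\epsilon$-dense set $F_i\subseteq X_i$ in $(X_i,d_i^\star)$. Set $F=\bigcup_{i=1}^n F_i$, which is a finite subset of $X$ because the union is finite. To verify that $F$ is $\epsilon$-dense in $(X,d_q^\star)$, take any $x\in X$; there is a unique $i\leq n$ with $x\in X_i$, and by the choice of $F_i$ some $y\in F_i$ satisfies $d_i^\star(x,y)<\epsilon$. Since $x,y$ lie in the same summand $X_i$, formula (3.3) gives $d_q^\star(x,y)=d_i^\star(x,y)<\epsilon$, so $x\in B_{d_q^\star}(y,\epsilon)$. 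Hence $X=\bigcup_{y\in F}B_{d_q^\star}(y,\epsilon)$.

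\textbf{Where the difficulty lies.} There is essentially no obstacle here, and the only subtle point worth flagging explicitly is the role of the finiteness of the index set $n$: if the indexing were infinite, the union $\bigcup_i F_i$ could be infinite and the argument would break down (indeed, with the $d_q^\star$ of formula (3.3), an infinite disjoint union of nonempty bounded spaces always contains an infinite $1$-separated set, hence fails to be totally bounded). The other subtlety is purely notational: one has to remember that $d_q^\star$ was constructed under the standing hypothesis that each $d_i^\star$ is bounded by $1$, which is exactly the hypothesis imposed in the statement of the theorem, so the construction of $d_q^\star$ applies without modification.
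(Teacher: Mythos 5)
Your proof is correct and follows essentially the same route as the paper: necessity via Theorem \ref{thm2} applied to the subspace $X_i$ of $(X,d^\star_q)$, and sufficiency by taking $F=\bigcup_{i=1}^n F_i$ of finite $\epsilon$-dense sets and noting that $d^\star_q$ restricts to $d_i^\star$ on each summand. Your added remarks on why finiteness of the index set is essential and on the role of the boundedness hypothesis are accurate but not needed for the argument.
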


\begin{proof}
Necessity. Assume that $(X,d^\star_{q})$ is totally bounded. One can easily get that $(X_{i},d_{i}^\star)$ is a subspace of $(X,d^\star_{q})$. So, all spaces $(X_{i},d_{i}^\star)$ are totally bounded by Theorem \ref{thm2}.

Sufficiency. Assume that all spaces $(X_{i},d_{i}^\star)$ are totally bounded. Then, for $\epsilon>0$, for any $x\in X_{i}$, there exists $y_{0}\in F_{i}(\epsilon)$ such that $d_{i}^\star(x,y_{0})<\epsilon$. Put $F(\epsilon)=\bigcup_{i=1}^n F_{i}(\epsilon)$, let $x$ be an arbitrary point of $X$, obviously, $x$ is also a point on some $X_{i}$. Thus, one can easily find a $y_{0}\in F_{i}(\epsilon)$, such that $d^\star_{q}(x,y_{0})=d_{i}^\star(x,y_{0})<\epsilon$. So, $(X,d^\star_{q})$ is totally bounded.
\end{proof}

\section{the completeness of $\star$-metric spaces}
Completeness is an important property in metric spaces. The completeness of metric spaces depend on the convergence of Cauchy sequences. Therefore, we extend the definition of Cauchy sequences and completeness in metric spaces to $\star$-metric spaces. Further, we study completeness properties of $\star$-metric spaces and give their characterization.

\begin{definition} Let $\{x_{n}\}_{n \in \mathbb{N}}$  be a sequence of a $\star$-metric space $(X,d^\star)$, and $x\in X$. If for every $\epsilon >0$, there exists ${k \in \mathbb{N}}$ such that $d^\star(x,x_{n})< \epsilon$ whenever $n\geqslant k$, then the sequence $\{x_{n}\}_{n \in \mathbb{N}}$ is said to {\it converge to $x$ under $d^\star$}, and we write $x_{n} \stackrel{d^\star}{\longrightarrow}x$.
\end{definition}

\begin{proposition} Let $(X,d^\star)$ be a $\star$-metric space. Then the following statements equivalent:
\begin{enumerate}
\item[(1)] $\{x_{n}\}_{n \in \mathbb{N}}$ converges to $x_{0}$ under  $ \mathscr{T}_{d^\star}$;
\item[(2)] $\{x_{n}\}_{n \in \mathbb{N}}$ converges to $x_{0}$ under  $ d^\star$.
\end{enumerate}
\end{proposition}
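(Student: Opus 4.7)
The plan is to show that the collection $\{B_{d^\star}(x_0,\epsilon):\epsilon>0\}$ is a neighbourhood base at $x_0$ in the topology $\mathscr{T}_{d^\star}$. Once this is established, both implications collapse to direct rewrites of the definitions of the two convergences. The half ``each $B_{d^\star}(x_0,\epsilon)$ lies in $\mathscr{T}_{d^\star}$'' is the only clause that requires real work; the other half is immediate from the very definition of $\mathscr{T}_{d^\star}$.

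\textbf{Main step (open balls are open).} Fix $y\in B_{d^\star}(x_0,\epsilon)$ and put $s=d^\star(x_0,y)<\epsilon$. By (T4) we have $s\star 0=s$, and by (T5) combined with the symmetry (T1) the map $t\mapsto s\star t$ is continuous at $t=0$; hence there exists $r>0$ such that $s\star t<\epsilon$ for every $t\in[0,r)$. (Lemma \ref{lem3} can serve as a substitute here.) Using the $\star$-triangle inequality (M3) and the monotonicity (T3), for any $z\in B_{d^\star}(y,r)$ one obtains
$$d^\star(x_0,z)\;\leqslant\; d^\star(x_0,y)\star d^\star(y,z)\;=\;s\star d^\star(y,z)\;<\;\epsilon,$$
so $B_{d^\star}(y,r)\subseteq B_{d^\star}(x_0,\epsilon)$, proving that $B_{d^\star}(x_0,\epsilon)\in\mathscr{T}_{d^\star}$.

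\textbf{(1) $\Rightarrow$ (2).} Given $\epsilon>0$, the ball $B_{d^\star}(x_0,\epsilon)$ is an open neighbourhood of $x_0$ by the main step, so topological convergence yields some $k$ with $x_n\in B_{d^\star}(x_0,\epsilon)$ for every $n\geqslant k$; this is exactly $d^\star(x_0,x_n)<\epsilon$, proving $x_n\stackrel{d^\star}{\longrightarrow}x_0$. \textbf{(2) $\Rightarrow$ (1).} Conversely, for any $U\in\mathscr{T}_{d^\star}$ containing $x_0$, the defining property of $\mathscr{T}_{d^\star}$ supplies $r>0$ with $B_{d^\star}(x_0,r)\subseteq U$; $d^\star$-convergence then makes $x_n$ eventually satisfy $d^\star(x_0,x_n)<r$, hence eventually lie in $U$.

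\textbf{Main obstacle.} The only non-routine point is the main step, and its difficulty lies precisely in the failure of the usual subadditive Lipschitz estimate $d(x_0,z)\leqslant d(x_0,y)+d(y,z)$: the $\star$-triangle inequality only controls $d^\star(x_0,z)$ by $s\star d^\star(y,z)$, which need not tend to $s$ as $d^\star(y,z)\to 0$ without the continuity axiom (T5). Thus the argument is genuinely dependent on (T5) (or equivalently Lemma \ref{lem3}), and everything else is bookkeeping.
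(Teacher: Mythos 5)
Your proof is correct and follows essentially the same route as the paper: both implications are the same direct unwinding of the definitions via the ball $B_{d^\star}(x_0,\epsilon)$. The only difference is that you explicitly verify that open balls are open (your ``main step,'' via (T1), (T4), (T5)), whereas the paper dismisses this with ``clearly,'' implicitly relying on the cited result of Khatami and Mirzavaziri that $\mathscr{T}_{d^\star}$ is a topology with the balls as a base; your added verification is sound and self-contained.
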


\begin{proof}
(1) $\Rightarrow$ (2). For every $\epsilon >0$, clearly, $B_{d^\star}(x_{0},\epsilon)$ is a neighborhood of $x_{0}$. Since $\{x_{n}\}_{n \in \mathbb{N}}$ converges to $x_{0}$ under  $ \mathscr{T}_{d^\star}$, there exists ${k \in \mathbb{N}}$ such that $x_{n} \in B_{d^\star}(x_{0},\epsilon)$ whenever $n\geqslant k$, i.e. $d^\star(x_{n},x_{0})< \epsilon$. Therefore $\{x_{n}\}_{n \in \mathbb{N}}$ converges to $x_{0}$ under  $ d^\star$.

(2) $\Rightarrow$ (1). For any neighborhood $U$ of the point $x_{0}$, there exists $\epsilon >0$ such that $B_{d^\star}(x_{0},\epsilon)\subset U$. Since $\{x_{n}\}_{n \in \mathbb{N}}$ converges to $x_{0}$ under  $ d^\star$, there exists $k \in \mathbb{N}$ such that $d^\star(x_{n},x_{0})< \epsilon$ whenever $n\geqslant k$, i.e. $x_{n} \in B_{d^\star}(x_{0},\epsilon)$. Thus $x_{n}\in U$ whenever $n\geqslant k$. Therefore $\{x_{n}\}_{n \in \mathbb{N}}$ converges to $x_{0}$ under $\mathscr{T}_{d^\star}$.
\end{proof}

\begin{definition}
Let $(X,d^\star)$ be a $\star$-metric space, the sequence $\{x_{n}\}$ is called {\it Cauchy sequence} in $(X,d^\star)$ if for every $\epsilon>0$ there exists $k \in \mathbb{N}$ such that $d^\star(x_{n},x_{m})< \epsilon$ whenever $m, n\geqslant k$.
\end{definition}

\begin{proposition}\label{thm6} Let $\{x_{n}\}$ a Cauchy sequence in $\star$-metric space $(X,d^\star)$. If $\{x_{n}\}$ has a accumulation point $x_{0}$, then $x_{n} \stackrel{d^\star}{\longrightarrow}x$.
\end{proposition}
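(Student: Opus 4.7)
The plan is to combine the Cauchy condition with the cluster-point condition using the $\star$-triangle inequality and the continuity-at-zero Lemma \ref{lem3}. I read the target convergence as $x_n \stackrel{d^\star}{\longrightarrow} x_0$ (the statement appears to have a minor typo), and I interpret ``$x_0$ is an accumulation point of $\{x_n\}$'' in the standard way: every neighborhood of $x_0$ contains $x_m$ for infinitely many indices $m$.

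First I would fix an arbitrary $\epsilon>0$. Using Lemma \ref{lem3}, I pick $\epsilon_1>0$ with $[0,\epsilon_1)\star[0,\epsilon_1)\subseteq[0,\epsilon)$; in particular $a\star b<\epsilon$ whenever $a,b<\epsilon_1$. Applying the definition of Cauchy sequence to $\epsilon_1$, I obtain $k\in\mathbb{N}$ such that $d^\star(x_n,x_m)<\epsilon_1$ for all $m,n\geqslant k$.

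Now I would use the accumulation-point hypothesis. Since $B_{d^\star}(x_0,\epsilon_1)$ is a neighborhood of $x_0$ in $\mathscr{T}_{d^\star}$, it contains $x_m$ for infinitely many $m$; in particular I can choose an index $m_0\geqslant k$ with $d^\star(x_{m_0},x_0)<\epsilon_1$. For any $n\geqslant k$ the $\star$-triangle inequality (M3) combined with (T3) then yields
\begin{equation*}
d^\star(x_n,x_0)\leqslant d^\star(x_n,x_{m_0})\star d^\star(x_{m_0},x_0)<\epsilon_1\star\epsilon_1<\epsilon,
\end{equation*}
which shows $x_n\stackrel{d^\star}{\longrightarrow}x_0$.

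The only subtlety I anticipate is making sure the index $m_0$ can be chosen simultaneously in the Cauchy ``tail'' $\{m:m\geqslant k\}$ and inside $B_{d^\star}(x_0,\epsilon_1)$; this is exactly why the accumulation-point hypothesis is used in its ``infinitely many indices'' form rather than in the weaker ``some subsequence gets close'' form. Apart from that, the argument is the standard metric-space proof, with the ordinary triangle inequality replaced by the $\star$-triangle inequality and with Lemma \ref{lem3} providing the bookkeeping device that plays the role of ``$\epsilon/2+\epsilon/2=\epsilon$''.
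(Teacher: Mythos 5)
Your proof is correct and follows essentially the same route as the paper's: choose $\epsilon_1$ with $\epsilon_1\star\epsilon_1<\epsilon$ via Lemma \ref{lem3}, take the Cauchy tail, pick one term of that tail close to $x_0$, and apply the $\star$-triangle inequality. In fact you handle the accumulation-point hypothesis more carefully than the paper does (the paper phrases it as if a whole tail were within $\epsilon_1$ of $x_0$, which is not what being an accumulation point gives; your ``infinitely many indices, so one index $m_0\geqslant k$'' formulation is the correct reading).
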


\begin{proof}
For every $\epsilon> 0$, by Lemma \ref{lem3}, take an $\epsilon_{1}> 0$ such that $\epsilon_{1}\star \epsilon_{1}<\epsilon$. Since $\{x_{n}\}$ is Cauchy sequence, there exists $k_{1} \in \mathbb{N}$ such that $d^\star(x_{n},x_{m})< \epsilon_{1}$ whenever $m, n\geqslant k_{1}$. Noting that $x_{0}$ is the accumulation point of $\{x_{n}\}$, there exists $k_{2}\in \mathbb{N}$ such that $d^\star(x_{0},x_{n})< \epsilon_{1}$ whenever $n \geqslant k_{2}$. Therefore, choose $k=\max\{k_{1},k_{2}\}$, while $m \geqslant k$, we have $$d^\star(x_{0}, x_{m}) \leqslant d^\star(x_{0}, x_{n})\star d^\star(x_{n}, x_{m})< \epsilon_{1}\star \epsilon_{1}< \epsilon.$$ This shows that $\{x_{n}\}$ converges to $x_{0}$.
\end{proof}

\begin{definition}
A $\star$-metric space $(X,d^\star)$ is {\it complete} if every Cauchy sequence in $(X,d^\star)$ is convergent to a point of $X$.
\end{definition}

\begin{theorem}\label{thm13}
 A $\star$-metric space $(X,d^\star)$ is compact if and only if $(X,d^\star)$ is complete and totally bounded.
\end{theorem}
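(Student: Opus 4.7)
The plan is to prove both directions by leveraging the sequential-compactness formulation of compactness from Corollary \ref{col1}, which is available because every $\star$-metric space is metrizable by Theorem \ref{Th1}. The $\star$-triangle inequality only differs from the usual one through the operator $\star$, so most standard metric arguments go through once I control $r \star r$ via Lemma \ref{lem3}.

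For the necessity direction, assume $(X,d^\star)$ is compact. Total boundedness is immediate: for any $\epsilon > 0$, the open cover $\{B_{d^\star}(x,\epsilon) : x \in X\}$ admits a finite subcover, whose centers form the required $\epsilon$-dense set. For completeness, take any Cauchy sequence $\{x_n\}$ in $X$. By Corollary \ref{col1}, compactness is equivalent to sequential compactness, so $\{x_n\}$ has a convergent subsequence; its limit is an accumulation point of $\{x_n\}$, and then Proposition \ref{thm6} forces the whole sequence to converge. Hence $(X,d^\star)$ is complete.

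For the sufficiency direction, assume $(X,d^\star)$ is complete and totally bounded. By Corollary \ref{col1} it suffices to show $X$ is sequentially compact. Given a sequence $\{x_n\} \subseteq X$, I plan to extract a Cauchy subsequence by a standard nested-subsequence construction and then invoke completeness. For each $k \in \mathbb{N}$, Lemma \ref{lem3} yields some $r_k > 0$ with $r_k \star r_k < 1/k$. By total boundedness, $X$ is covered by finitely many balls of radius $r_k$. Inductively pick a nested chain of infinite subsequences $\{x_n^{(k)}\}_n$, where $\{x_n^{(k+1)}\}_n$ is a subsequence of $\{x_n^{(k)}\}_n$ entirely contained in some single ball $B_{d^\star}(y_k, r_k)$; this is possible because at least one ball in the $r_k$-cover contains infinitely many terms of $\{x_n^{(k)}\}_n$. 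The diagonal subsequence $y_k := x_k^{(k)}$ then has the property that for all $m \geq n \geq k$, both $y_n$ and $y_m$ lie in $B_{d^\star}(y_k, r_k)$, so the $\star$-triangle inequality gives
\begin{equation*}
d^\star(y_n, y_m) \leqslant d^\star(y_n, y_k) \star d^\star(y_k, y_m) < r_k \star r_k < 1/k.
\end{equation*}
Hence $\{y_k\}$ is Cauchy, and by completeness converges to some point of $X$. Thus every sequence has a convergent subsequence, so $X$ is sequentially compact, hence compact.

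The only delicate step is the extraction of the Cauchy subsequence: unlike the metric case where diameters of balls satisfy the handy bound $\mathrm{diam}(B(z,r)) \leq 2r$, here I must arrange my radii $r_k$ so that $r_k \star r_k \to 0$, which is precisely what Lemma \ref{lem3} provides. Everything else reduces to routine verification once the right choice of radii is in place.
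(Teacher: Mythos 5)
Your proof is correct and follows essentially the same route as the paper: necessity via sequential compactness plus Proposition \ref{thm6}, and sufficiency via the nested finite-ball covers and a diagonal Cauchy subsequence. Your two small deviations are actually improvements in rigor --- obtaining total boundedness directly from a finite subcover of $\{B_{d^\star}(x,\epsilon)\}_{x\in X}$ rather than routing through Corollary \ref{col2}, and pre-shrinking the radii to $r_k$ with $r_k\star r_k<1/k$ via Lemma \ref{lem3}, which makes the Cauchy estimate explicit where the paper's use of radii $1/k$ leaves the step $\frac{1}{k}\star\frac{1}{k}\to 0$ implicit (only the notational reuse of $y_k$ for both the ball centers and the diagonal terms should be cleaned up).
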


\begin{proof}
Necessity. Let $(X,d^\star)$ be a compact $\star$-metric space. According to Corollary \ref{col2}, $(X,d^\star)$ is totally bounded. According to Proposition \ref{thm6}, if a Cauchy sequence in space $(X,d^\star)$ has convergent subsequences, then this Cauchy sequence converges. Since compact $\star$-metric space is sequentially compact, which means that every sequence of points of $X$ has a convergent subsequence. By Corollary \ref{col1}, every Cauchy sequence in $(X,d^\star)$ is convergent to a point of $X$. This implies that $(X,d^\star)$ is complete.

Sufficiency. Let $(X,d^\star)$ be a complete and totally bounded $\star$-metric space. To conclude the proof it suffices to show that $X$ is sequentially compact which implies that $X$ is compact, by Corollary \ref{col1}.

Let $\{x_{n}\}$ be any sequence in $\star$-metric space $(X,d^\star)$. From the total boundedness of space $X$, there exists finite open-balls cover $X$ with radius $1$. At least one of the finite open-ball $B_{d^\star}^1$ contains infinite points $x_{n}$ in sequence $\{x_{n}\}$. Let the set formed by the subscript $n$ of $x_{n}$ contained in $B_{d^\star}^1$ be $N_{1}$, then $N_{1}$ is an infinite set, such that $x_{n}\in B_{d^\star}^1$ whenever $n \in N_{1}$. Then use finite open-balls to cover $X$ with radius $1/2$. Among these finite open-balls, there must be at least one open-ball $B_{d^\star}^2$ and an infinite subset $N_{2}$ of $N_{1}$, such that $x_{n}\in B_{d^\star}^2$ whenever $n \in N_{2}$. Generally speaking, taking the infinite subset $N_{k}$ of the positive integer set, we can select the open-ball $B_{d^\star}^{k+1}$ with radius of 1/(k+1) and the infinite set $N_{k+1}\subset N_{k}$, such that $x_{n}\in B_{d^\star}^{k+1}$ whenever $n \in N_{k+1}$.

Take $n_{1}\in N_{1}$, $n_{2}\in N_{2}$, which $n_{2}> n_{1}$. Generally speaking, $n_{k}$ has been taken, we can choose $n_{k+1}\in N_{k+1}$ so that $n_{k+1}> n_{k}$. Since for every $N_{k}$ is infinite set, the above method can be completed. For $i,j \geqslant k$, we have $n_{i},n_{j}\in N_{k}$ such that $x_{n_{i}},x_{n_{j}}\in B_{d^\star}^k$. This implies that $\{x_{n_{k}}\}$ is a Cauchy sequence, and it is convergent by completeness of $(X,d^\star)$.
\end{proof}

The {\it distance $D(x, A)$ from a point to a set $A$} in a $\star$-metric space $(X,d^\star)$ is defined by letting  $$D(A,x)=D(x, A)=\inf_{y \in A}\{d^\star(x,y)\}, ~if~ A\neq\emptyset, ~and~ D(x,\emptyset)=D(\emptyset, x)=1.$$

\begin{proposition}\label{thm8} Let $(X,d^\star)$ be a $\star$-metric space, and $A\subset X$. Then $\overline{A}=\{x:D(A,x)=0\}$.
\end{proposition}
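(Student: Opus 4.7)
The plan is to prove the equality by a double-inclusion argument, treating the trivial case $A = \emptyset$ separately and then, for non-empty $A$, reducing both sides to the same $\varepsilon$-condition via the fact that the open $d^\star$-balls centred at $x$ form a neighbourhood base at $x$ in the topology $\mathscr{T}_{d^\star}$.

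First I would dispose of $A = \emptyset$: by definition $\overline{A} = \emptyset$, while $D(x,\emptyset) = 1 \neq 0$ for every $x \in X$, so $\{x : D(A,x) = 0\} = \emptyset$ as well, and the two sets agree.

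Next, assume $A \neq \emptyset$ and argue both inclusions in parallel. For $x \in X$, the key observation is that, by the definition of $\mathscr{T}_{d^\star}$, every neighbourhood $U$ of $x$ contains a ball $B_{d^\star}(x,\varepsilon)$ for some $\varepsilon > 0$, and conversely every such ball is itself a neighbourhood of $x$; hence $x \in \overline{A}$ if and only if $B_{d^\star}(x,\varepsilon) \cap A \neq \emptyset$ for every $\varepsilon > 0$. This in turn is the statement that for every $\varepsilon > 0$ there exists $y \in A$ with $d^\star(x,y) < \varepsilon$, which is precisely $\inf_{y \in A} d^\star(x,y) = 0$, i.e.\ $D(x,A) = 0$. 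Chaining these equivalences gives $\overline{A} = \{x : D(A,x) = 0\}$.

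There is essentially no obstacle in this argument: neither the $\star$-triangle inequality nor the continuity of $\star$ is needed, only the definitions of $D(\cdot,A)$, of the infimum, and of $\mathscr{T}_{d^\star}$. The only small care required is the empty-set convention $D(x,\emptyset) = 1$, which is exactly why the formula $\overline{A} = \{x : D(A,x) = 0\}$ remains correct in that case.
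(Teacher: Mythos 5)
Your proof is correct, and at bottom it is the same double-inclusion argument as the paper's, but the two texts decompose it differently. The paper proves $\overline{A}\subseteq\{x:D(A,x)=0\}$ by picking a sequence in $A$ converging to $x_0$ (implicitly invoking the sequential characterization of closure, which is legitimate here because the space is first countable, though the paper does not say so), and proves the reverse inclusion by contradiction, extracting a ball $B_{d^\star}(y,\epsilon_0)$ disjoint from $A$ so that $D(y,A)\geqslant\epsilon_0$. You replace both halves by a single chain of equivalences --- $x\in\overline{A}$ iff every ball about $x$ meets $A$ iff $\inf_{y\in A}d^\star(x,y)=0$ --- which is cleaner, avoids the sequence detour, and also covers the case $A=\emptyset$ that the paper silently ignores. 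One small caveat: your remark that ``neither the $\star$-triangle inequality nor the continuity of $\star$ is needed'' is slightly overstated, since the step ``$x\in\overline{A}$ implies $B_{d^\star}(x,\varepsilon)\cap A\neq\emptyset$'' uses that each ball is a neighbourhood of its centre, and the openness of balls in $\mathscr{T}_{d^\star}$ does rest on those properties (via the cited results of Khatami and Mirzavaziri and Lemma \ref{lem3}); it would be worth flagging that you are quoting this fact rather than getting it for free from the definition of $\mathscr{T}_{d^\star}$.
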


\begin{proof}
For any $x_{0}\in \overline{A}$ there exists $\{x_{n}\}_{n \in \mathbb{N}}\subset A$ such that $x_{n} \stackrel{d^\star}{\longrightarrow} x_{0}$. This implies that $d^\star(x_{n},x_{0})\rightarrow 0$. Since $0\leqslant d^\star(x_{0},A)\leqslant d^\star(x_{n},x_{0})\rightarrow 0$, we have that $d^\star(x_{0},A)=0$, which implies that $x_{0}\in \{x:D(A,x)=0\}$. Therefore $\overline{A}\subseteq \{x:D(A,x)=0\}$.

Suppose the contrary that take $y\in \{x:D(A,x)=0\}$ which satisfies $d^\star(y,A)=0$ and $y\notin \overline{A}$. Then there exists $\epsilon_{0} >0$ such that $B_{d^\star}(y,\epsilon_{0})\cap A=\emptyset$, which implies that $d^\star(y,A)\geqslant \epsilon_{0}$. This is a contradiction with $d^\star(y,A)=0$. Thus $\overline{A}\supseteq \{x:D(A,x)=0\}$.

This shows that $\overline{A}= \{x:D(A,x)=0\}$.
\end{proof}

\begin{definition}
Let $A$ be a subset of $\star$-metric space $(X,d^\star)$. We define $\delta(A)=\sup_{x,y\in A}\{d^\star(x,y)\}$ as the {\it diameter} of the set $A$; it can be finite or equal to $\infty$. We also define $\delta(\emptyset)=0$.
\end{definition}

Cantor theorem is an important characterization of complete metric spaces. Similarly, we extend the Cantor theorem in metric spaces into $\star$-metric spaces.

\begin{theorem}\label{thm7}
A $\star$-metric space is complete if and only if for every decreasing sequence $F_{1}\supseteq F_{2}\supseteq F_{3}\supseteq\ldots$ of non-empty closed subsets of space $X$, such that $\lim_{n\rightarrow \infty }\delta(F_{n})=0$, the intersection $\bigcap_{n=1}^\infty F_{n}$ is a one-point set.
\end{theorem}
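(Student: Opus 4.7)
The plan is to prove the two directions separately, using the iterated triangle inequality and the continuity-at-zero fact from Lemma \ref{lem3} to handle the $\star$-operation.

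For the necessity direction, I would assume $(X,d^\star)$ is complete and take a decreasing sequence $F_1\supseteq F_2\supseteq\cdots$ of non-empty closed sets with $\delta(F_n)\to 0$. Choose any $x_n\in F_n$; for $m,n\geqslant k$ both points lie in $F_k$, so $d^\star(x_n,x_m)\leqslant \delta(F_k)\to 0$, making $\{x_n\}$ Cauchy. Completeness yields $x_n\stackrel{d^\star}{\longrightarrow}x$, and since $\{x_j\}_{j\geqslant k}\subseteq F_k$ with $F_k$ closed, $x\in F_k$ for every $k$, so $x\in\bigcap_n F_n$. Uniqueness follows because any two points of the intersection have distance bounded by every $\delta(F_n)$, hence distance $0$, hence are equal by (M1).

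For the sufficiency direction, let $\{x_n\}$ be Cauchy and set $F_n=\overline{\{x_k:k\geqslant n\}}$, obviously non-empty, closed, decreasing. The main obstacle is showing $\delta(F_n)\to 0$: knowing the tail $\{x_k:k\geqslant N\}$ has small diameter does not immediately control the diameter of its closure under a $\star$-metric, since $\star$ need not be jointly continuous. I would handle this by a double application of Lemma \ref{lem3}: given $\epsilon>0$, pick $r>0$ with $[0,r)\star[0,r)\subseteq[0,\epsilon)$, then $r_1\in(0,r]$ with $[0,r_1)\star[0,r_1)\subseteq[0,r)$, so that any product $a\star b\star c$ with $a,b,c<r_1$ satisfies $a\star b\star c<\epsilon$ (using associativity (T2) and monotonicity (T3) in both arguments, which follows from (T1) and (T3)). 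By Cauchyness, choose $N$ with $d^\star(x_n,x_m)<r_1$ for $n,m\geqslant N$. For any $y,z\in F_N$, pick sequences $y_i,z_i\in\{x_k:k\geqslant N\}$ converging to $y,z$ respectively; applying (M3) twice and (T3) gives
\[
d^\star(y,z)\leqslant d^\star(y,y_i)\star d^\star(y_i,z_i)\star d^\star(z_i,z),
\]
and for $i$ large enough the three factors are each below $r_1$, so $d^\star(y,z)<\epsilon$. Hence $\delta(F_N)\leqslant\epsilon$, proving $\delta(F_n)\to 0$.

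Applying the hypothesis, $\bigcap_n F_n=\{x\}$ for some $x\in X$. Since $x,x_n\in F_n$ we get $d^\star(x,x_n)\leqslant\delta(F_n)\to 0$, so $x_n\stackrel{d^\star}{\longrightarrow}x$, and $(X,d^\star)$ is complete.

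The only delicate step is the diameter-of-closure argument in the sufficiency direction; everything else is a straightforward Cauchy/limit manipulation. The iterated use of Lemma \ref{lem3} is what replaces the one-line $\tfrac{\epsilon}{2}+\tfrac{\epsilon}{2}+\tfrac{\epsilon}{2}$ estimate familiar from ordinary metric spaces.
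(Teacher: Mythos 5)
Your proof is correct, and while the necessity direction coincides with the paper's argument (pick $x_n\in F_n$, show it is Cauchy via $\delta(F_m)$, use closedness for membership and vanishing diameters for uniqueness), your sufficiency direction takes a genuinely different route. The paper builds its nested family out of closed balls $\overline{B_{d^\star}(x_{l_k},\tfrac{1}{r_k})}$ centred at a carefully chosen subsequence of the Cauchy sequence, and then must extract a further subsequence $H_{n+1}=F_{k_{n+1}}$ with $\tfrac{1}{r_{k_{n+1}}}\star\tfrac{1}{r_{k_{n+1}}}<\tfrac{1}{r_{k_n}}$ precisely in order to force the nesting $H_{n+1}\subseteq H_n$; the diameters of these balls are controlled directly by $\tfrac{1}{r_k}\star\tfrac{1}{r_k}\to 0$ via Lemma \ref{lem3}. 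You instead take $F_n=\overline{\{x_k:k\geqslant n\}}$, which is nested for free, and concentrate all the work in the single estimate $\delta(F_N)\leqslant\epsilon$; your two-step application of Lemma \ref{lem3} to make a three-fold product $a\star b\star c$ small, combined with the approximation of points of the closure by tail points, handles exactly the spot where joint continuity of $\star$ is unavailable, and it is carried out correctly (the third factor $d^\star(y_i,z_i)<r_1$ coming from Cauchyness, the outer two from convergence). Your construction buys a cleaner bookkeeping-free argument and also sidesteps a sloppy point in the paper, which asserts $\delta(F_k)\leqslant\tfrac{2}{r_k}$ for the closed ball --- an additive bound that need not hold for a general $t$-definer (e.g.\ $a\star b=(\sqrt a+\sqrt b)^2$ gives $\tfrac{1}{r}\star\tfrac{1}{r}=\tfrac{4}{r}$), though the paper's conclusion $\delta(F_k)\to 0$ survives via Lemma \ref{lem3}; the paper's construction, in exchange, never has to estimate the diameter of the closure of an arbitrary set. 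Both proofs rest on Lemma \ref{lem3} as the substitute for the classical $\tfrac{\epsilon}{2}$-arguments, and both finish identically by reading off $d^\star(x,x_n)\leqslant\delta(F_n)\to 0$.
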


\begin{proof}
Necessity. Let $(X,d^\star)$ be a complete $\star$-metric space, and $F_{1}, F_{2},\dots$ a sequence of non-empty closed subsets of $X$ such that $$\lim_{n\rightarrow \infty }\delta(F_{n})=0 ~and~ F_{n+1}\subset F_{n}~ for~ n=1,2,\dots$$ Choose $x_{n}\in F_{n}$, for every $n\in \mathbb{N}$. Now we shall prove that $\{x_{n}\}$ is a Cauchy sequence. According to $\lim_{n\rightarrow \infty }\delta(F_{n})=0$, for $\epsilon >0$, there exists a $k \in \mathbb{N}$ such that $\delta(F_{n})< \epsilon$ where $ n> k$. When $n\geqslant m>k$, we have $x_{n}\in F_{n}\subset F_{m}$ since $\{F_{n}\}$ is a decreasing sequence. Furthermore $x_{m}\in F_{m}$, so that $$d^\star(x_{n},x_{m})< \delta(F_{m})<\epsilon .$$ So, $\{x_{n}\}$ is a Cauchy sequence and thus is convergent to a point $x_{0}\in X$. Thus, any neighborhood of $x_{0}$ intersects $F_{n}$ $(n=1,2,\dots)$. The sets $F_{n}$ being closed, we have $x_{0}\in \bigcap_{n=1}^\infty F_{n}$

Now, we need proof $\bigcap_{n=1}^\infty F_{n}=\{x_{0}\}$. If there is an $y\in \bigcap_{n=1}^\infty F_{n}$, by $lim_{n\rightarrow \infty }\delta(F_{n})=0$, choose a $k\in \mathbb{N}$ such that $\delta(F_{n})< \epsilon$ where $ n> k$. Then we have $$d^\star(x_{0},y)< \delta(F_{n})<\epsilon, ~where~ x_{0}, y\in F_{n}.$$ Arbitrariness by $\epsilon$, $d^\star(x_{0},y)=0$, so $x_{0}=y$.

Sufficiency. Let $\{x_{n}\}$ be a Cauchy sequence of $(X,d^\star)$. For every $k\in \mathbb{N}$, there exists $l_{k}, ~r_{k}$ such that $d^\star (x_{l_{k}},x_{n})\leqslant \frac{1}{r_{k+1}}$ where $ n> l_{k}$. Let $x_{l_{k}}$ be the smallest positive integer with the above properties, so that $l_{k}\leqslant l_{k+1}, r_{k}\leqslant r_{k+1}(k=1,2,\dots)$. Construct the following sequence of closed subset $\{F_{n}\}$ which defined by letting $$F_{k}=\overline{B_{d^\star}(x_{l_{k}},\frac{1}{r_{k}})},~ (k=1,2,\dots),$$ here $B_{d^\star}(x_{l_{k}},\frac{1}{r_{k}})=\{y \in X:d^\star(x_{l_{k}},y)\leqslant \frac{1}{r_{k}}\}$. By Proposition \ref{thm8}, we can get that $d^\star (F_{k})\leqslant \frac{2}{r_{k}}$, this implies that $\lim_{n\rightarrow \infty }\delta(F_{n})=0$.

Now take subset $\{H_{n}\}$ of $\{F_{n}\}$. We define $H_{1}=F_{k_{1}}, k_{1}=1$. Then take $H_{2}=F_{k_{2}}$, by Lemma \ref{lem3}, we can set that $k_{2}=\min \{j\geqslant 2: \frac{1}{r_{j}}\star \frac{1}{r_{j}}<\frac{1}{r_{k_{1}}}\}$. Generally speaking, if we take the positive integer $k_{n}$, we can take $k_{n+1}=\min \{j\geqslant k_{n}+1: \frac{1}{r_{j}}\star \frac{1}{r_{j}}<\frac{1}{r_{k_{n}}}\}$, such that $H_{n+1}=F_{k_{n+1}}$. Now we shall show that $\{H_{n}\}$ satisfies the conditions in our theorem.

Let $y\in H_{n+1}$, then according to the selection method of $l_{k}$, we have $$d^\star(y,x_{l_{k_{n+1}}})< \frac{1}{r_{k_{n+1}}}, ~d^\star(x_{l_{k_{n+1}}},x_{l_{k_{n}}})< \frac{1}{r_{k_{n}+1}},$$
thus $$d^\star(y,x_{l_{k_{n}}})\leqslant d^\star(y,x_{l_{k_{n+1}}})\star d^\star(x_{l_{k_{n+1}}},x_{l_{k_{n}}})< \frac{1}{r_{k_{n+1}}}\star \frac{1}{r_{k_{n}+1}}< \frac{1}{r_{k_{n+1}}} \star \frac{1}{r_{k_{n+1}}}< \frac{1}{r_{k_{n}}}.$$ This implies that $y\in H_{n}$, i.e. $H_{n+1}\subset H_{n}$.

According to the assumption, there should be  $\bigcap_{n=1}^\infty F_{n}=\{x_{0}\}$. Now we shall show that a Cauchy sequence $\{x_{n}\}$ is convergent to a point $x_{0}$. For every $\epsilon> 0$, by Lemma \ref{lem3}, take a $r_{k}\in \mathbb{N}$ such that $\frac{1}{r_{k}}\star \frac{1}{r_{k}}<\epsilon$, and $d^\star(x_{l_{k}},x_{n})\leqslant \frac{1}{r_{k+1}}$. In addition, $x_{0}\in F_{k}$, $d^\star(x_{l_{k}},x_{0})\leqslant \frac{1}{r_{k}}$, then we have $$d^\star(x_{0},x_{n})\leqslant d^\star(x_{0},x_{l_{k}})\star d^\star(x_{l_{k}},x_{n})< \frac{1}{r_{k}}\star \frac{1}{r_{k+1}}< \frac{1}{r_{k}}\star \frac{1}{r_{k}}< \epsilon.$$ Thus, $\lim_{n\rightarrow \infty }x_{n}=x_{0}$, $(X,d^\star)$ is a complete $\star$-metric space.
\end{proof}

\begin{theorem}
A $\star$-metric space is complete if and only if for every family of closed subsets of $X$ which has the finite intersection property and for every $\epsilon>0$ contains a set of diameter less than $\epsilon$ has non-empty intersection.
\end{theorem}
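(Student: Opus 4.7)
The plan is to derive this theorem from the Cantor-style characterization already established in Theorem \ref{thm7}, so both directions reduce to extracting a decreasing sequence of non-empty closed sets with diameter tending to zero.

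For the sufficiency direction (the hypothesis implies completeness), I would invoke Theorem \ref{thm7}. Given any decreasing sequence $F_{1}\supseteq F_{2}\supseteq\cdots$ of non-empty closed subsets with $\lim_{n\to\infty}\delta(F_{n})=0$, the family $\mathcal{F}=\{F_{n}:n\in\mathbb{N}\}$ trivially has the finite intersection property (any finite subfamily intersects in its largest-indexed member) and for every $\epsilon>0$ some $F_{n}$ has diameter less than $\epsilon$. The hypothesis then gives $\bigcap_{n}F_{n}\neq\emptyset$, and a standard two-point argument using $\delta(F_{n})\to 0$ forces this intersection to be a singleton, so Theorem \ref{thm7} yields completeness.

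For the necessity direction, assume $(X,d^{\star})$ is complete and let $\mathcal{F}$ be a family of closed subsets with the finite intersection property such that for every $\epsilon>0$ some member of $\mathcal{F}$ has diameter less than $\epsilon$. For each $n\in\mathbb{N}$ I would choose $F_{n}\in\mathcal{F}$ with $\delta(F_{n})<\tfrac{1}{n}$ and set $G_{n}=F_{1}\cap F_{2}\cap\cdots\cap F_{n}$. Each $G_{n}$ is closed, non-empty by the finite intersection property, the sequence $\{G_{n}\}$ is decreasing, and $\delta(G_{n})\leqslant\delta(F_{n})<\tfrac{1}{n}$, so $\lim_{n\to\infty}\delta(G_{n})=0$. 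By Theorem \ref{thm7}, $\bigcap_{n=1}^{\infty}G_{n}=\{x_{0}\}$ for some $x_{0}\in X$.

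It remains to show that $x_{0}$ lies in every $F\in\mathcal{F}$. Given such an $F$, the family $\{F,F_{1},\ldots,F_{n}\}$ has non-empty intersection by the finite intersection property, so I can pick $y_{n}\in F\cap G_{n}$. Since both $x_{0}$ and $y_{n}$ lie in $G_{n}$, we have $d^{\star}(x_{0},y_{n})\leqslant\delta(G_{n})<\tfrac{1}{n}$, hence $y_{n}\stackrel{d^{\star}}{\longrightarrow}x_{0}$; as $F$ is closed and $\{y_{n}\}\subseteq F$, we conclude $x_{0}\in F$. Therefore $x_{0}\in\bigcap\mathcal{F}$, and the intersection is non-empty. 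The only mild subtlety in the whole argument is verifying that $\delta(H)\leqslant\delta(H')$ when $H\subseteq H'$ (immediate from the definition) and keeping track of which $\star$-triangle inequalities one actually needs; no auxiliary use of Lemma \ref{lem3} is required here, since the diameter bounds already absorb the $\star$-triangle operation.
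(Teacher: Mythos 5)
Your proposal is correct and follows essentially the same route as the paper: both directions are reduced to Theorem \ref{thm7} by forming the decreasing intersections $G_{n}=F_{1}\cap\cdots\cap F_{n}$ of members of small diameter. The only (cosmetic) difference is in the last step: where you pick $y_{n}\in F\cap G_{n}$ and use closedness of $F$ together with $y_{n}\stackrel{d^{\star}}{\longrightarrow}x_{0}$, the paper instead applies Theorem \ref{thm7} a second time to the sequence $F\cap G_{n}$ to conclude $x_{0}\in F$; both are valid.
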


\begin{proof}
Sufficiency of the condition in our theorem for completeness of a $\star$-metric space follows from the Theorem \ref{thm7}.

We shall show that the condition holds in every complete $\star$-metric space $(X,d^\star)$. Consider a family $\{F_{s}\}_{s\in S}$ of closed subsets of $X$ which has the finite intersection property and which for every $j\in \mathbb{N}$ contains a set $F_{s_{j}}$, such that $\delta(F_{s_{j}})< \frac{1}{j}$. Let $F_{i}=\bigcap_{j\leqslant i}F_{s_{j}}$.
One easily sees that the sequence $F_{1}, F_{2},\dots$ satisfies the condition of the Cantor theorem, since $F_{n+1}\subset F_{n}$ and  $\delta(F_{i})\leqslant \delta(F_{s_{i}})< \frac{1}{i}$ which means $\lim_{n\rightarrow \infty }\delta(F_{n})=0$. 
So that there exists an $x\in \bigcap_{i=1}^\infty F_{i}$. Clearly, we have $\bigcap_{i=1}^\infty F_{i}=\{x\}$. Now, let us take an arbitrary $s_{0}\in S$; letting $F_{i}'=F_{s_{0}}\cap F_{i}$ for $i=1,2,\dots$ we obtain again a sequence $F_{1}',F_{2}',\dots$ satisfying the conditions of the Theorem \ref{thm7}. Since $$\emptyset \neq \bigcap_{i=1}^\infty F_{i}'=F_{s_{0}}\cap \bigcap_{i=1}^\infty F_{i}=F_{s_{0}}\cap \{x\},$$ we have $x\in F_{s_{0}}$. Hence $x\in \bigcap_{s\in S}F_{s}$.
\end{proof}

\begin{theorem} \label{thm10}
A subspace $(M,d^\star)$ of a complete $\star$-metric space $(X,d^\star)$ is complete if and only if $M$ is closed in $X$.
\end{theorem}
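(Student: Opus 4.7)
The approach is to argue both directions by adapting the classical metric-space proof, using Lemma \ref{lem3} to compensate for the lack of an additive triangle inequality and the characterization $\overline{A}=\{x:D(A,x)=0\}$ from Proposition \ref{thm8} to pass between closure and approximating sequences.

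For the forward direction (completeness of $M$ implies $M$ is closed in $X$), I would take an arbitrary $x\in\overline{M}$ and use Proposition \ref{thm8} to produce a sequence $\{x_n\}\subseteq M$ with $d^\star(x_n,x)<\tfrac{1}{n}$, so $x_n\stackrel{d^\star}{\longrightarrow}x$. The key intermediate claim is that $\{x_n\}$ is Cauchy in $(M,d^\star)$: given $\epsilon>0$, Lemma \ref{lem3} yields $r>0$ with $[0,r)\star[0,r)\subseteq[0,\epsilon)$, and for indices $n,m$ large enough that $\tfrac{1}{n},\tfrac{1}{m}<r$, the $\star$-triangle inequality together with the monotonicity axiom (T3) gives
\[
d^\star(x_n,x_m)\leqslant d^\star(x_n,x)\star d^\star(x,x_m)<r\star r<\epsilon.
\]
Completeness of $M$ then produces $y\in M$ with $x_n\stackrel{d^\star}{\longrightarrow}y$, and because $\mathscr{T}_{d^\star}$ is Hausdorff (as noted in the introduction) limits are unique, so $x=y\in M$. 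Hence $\overline{M}\subseteq M$.

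For the reverse direction, assume $M$ is closed in the complete space $X$ and let $\{x_n\}\subseteq M$ be Cauchy in $(M,d^\star)$. Then $\{x_n\}$ is also Cauchy in $(X,d^\star)$, so by completeness of $X$ there exists $x\in X$ with $x_n\stackrel{d^\star}{\longrightarrow}x$. The bound $0\leqslant D(x,M)\leqslant d^\star(x,x_n)\to 0$ forces $D(x,M)=0$, whence $x\in\overline{M}=M$ by Proposition \ref{thm8}. Therefore the Cauchy sequence converges inside $M$ and $(M,d^\star)$ is complete.

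The only genuinely non-routine ingredient is the verification that a $d^\star$-convergent sequence is Cauchy, which is the step where a triangle inequality of the form $d(x_n,x_m)\leqslant d(x_n,x)+d(x,x_m)$ would be automatic in the metric setting but here requires an explicit appeal to Lemma \ref{lem3}. I expect this to be the main (and only) technical point; all remaining manipulations mirror the standard proof for metric spaces.
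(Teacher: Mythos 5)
Your proof is correct, and the sufficiency direction coincides with the paper's word for word. The necessity direction, however, follows a genuinely different route. The paper takes $x\in\overline{M}$, forms the decreasing closed sets $F_k = M\cap\overline{B_{d^\star}(x,\tfrac{1}{k})}$ inside $M$, and invokes the Cantor-type intersection theorem (Theorem \ref{thm7}) for the complete space $(M,d^\star)$ to conclude $\bigcap_k F_k=\{x\}$ and hence $x\in M$. You instead extract a sequence $\{x_n\}\subseteq M$ with $d^\star(x_n,x)<\tfrac{1}{n}$ via Proposition \ref{thm8}, verify directly that it is Cauchy using Lemma \ref{lem3}, and identify its limit in $M$ with $x$ by Hausdorffness. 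Your argument is more elementary and self-contained: it uses only the definition of completeness rather than the machinery of Theorem \ref{thm7}, and it makes explicit the two points the paper's version glosses over, namely that each $F_k$ is nonempty (which needs $x\in\overline{M}$) and that $\delta(F_k)\to 0$ (which silently requires Lemma \ref{lem3}). The paper's approach, in exchange, showcases the Cantor theorem as a working tool in $\star$-metric spaces. One cosmetic remark: in your display the final step should read that $d^\star(x_n,x)\star d^\star(x,x_m)\in[0,r)\star[0,r)\subseteq[0,\epsilon)$ rather than ``$<r\star r<\epsilon$'', since Lemma \ref{lem3} controls products of values strictly below $r$, not $r\star r$ itself; this is the same mild abuse of notation the paper commits throughout, and it does not affect the validity of the argument.
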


\begin{proof}
Necessity. Let $x\in \overline{M}$, and we define $F_{k}=M\cap \overline{B_{d^\star}(x,\frac{1}{k})}(k=1,2,\dots)$, then sequence $\{F_{k}\}$ is non-empty closed subsets in subspace $M$, so one can easily check that $\{F_{k}\}$ satisfies the conditions $(1),(2)$ in the Theorem \ref{thm7}. Since subspace $(M,d^\star)$ is complete, by Theorem \ref{thm7}, obviously $\bigcap_{k=1}^\infty F_{k}=\{x\}$, it follows that $x\in M$. Therefore $M= \overline{M}$.

Sufficiency. Let $M$ is a closed set, every Cauchy sequence of $\star$-metric space $(M,d^\star)$ is also a Cauchy sequence of complete $\star$-metric space $(X,d^\star)$, so it converges to a certain point $x\in X$. Since $M$ is closed in $X$, $x\in M$. This completes the proof.
\end{proof}

The following theorem shows that in a class of $\star$-metric spaces, the completeness is preserved by finite products.

\begin{theorem}\label{thm11} Let $\{(X_{i},d_{i}^\star)\}_{i=1}^n $ be a family of finite nonempty $\star$-metric spaces and $X=\prod_{i=1}^{n}X_{i}$ the Cartesian product. Then
\begin{enumerate}
\item[(1)]$X$ with the $\star$-metric $d_{T}^\star$ defined by formula (3.1) is complete if and only if all $\star$-metric spaces $(X_{i},d_{i}^\star)$ are complete;
\item[(2)]$X$ with the $\star$-metric $d_{max}^\star$ defined by formula (3.2) is complete if and only if all $\star$-metric spaces $(X_{i},d_{i}^\star)$ are complete.
\end{enumerate}
\end{theorem}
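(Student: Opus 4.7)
The plan is to treat the two parts in parallel, reducing each equivalence to two ingredients: a coordinatewise inequality that lets us pass Cauchyness and convergence from $X$ down to each $X_i$, and the reverse assembly using iterated applications of Lemma \ref{lem3}. Throughout I will write $x_k=(x_k^{(1)},\dots,x_k^{(n)})$ for points of $X$.

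For \emph{necessity} in both (1) and (2), I would reuse the isometric embedding introduced in the proof of Theorem \ref{thm4}. Fix $i\leqslant n$, choose a base point $x_j^{*}\in X_j$ for each $j\neq i$, and let $X_i^{*}\subseteq X$ be the slice consisting of all tuples whose $j$-th coordinate equals $x_j^{*}$ for $j\neq i$. Then the projection $p_i^{*}:X_i^{*}\to X_i$ is a bijection with $d_{T}^{\star}(y,z)=d_i^{\star}(p_i^{*}(y),p_i^{*}(z))$ (using axiom (T4) to absorb the zero contributions from the other coordinates) and likewise $d_{\max}^{\star}(y,z)=d_i^{\star}(p_i^{*}(y),p_i^{*}(z))$. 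Hence a Cauchy sequence in $(X_i,d_i^{\star})$ lifts to a Cauchy sequence in $X$, whose limit must lie in the closed subspace $X_i^{*}$ (closedness follows from the fact that the induced topology on $X$ agrees with the product topology, already recorded from \cite{AG}); its image under $p_i^{*}$ is the desired limit in $X_i$.

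For \emph{sufficiency} in (2) the argument is immediate: if $\{x_k\}$ is $d_{\max}^{\star}$-Cauchy then each $\{x_k^{(i)}\}$ is $d_i^{\star}$-Cauchy because $d_i^{\star}(x_k^{(i)},x_l^{(i)})\leqslant d_{\max}^{\star}(x_k,x_l)$, so each coordinate converges to some $y_i\in X_i$, and then for $\epsilon>0$ picking $k_0$ so large that $d_i^{\star}(x_k^{(i)},y_i)<\epsilon$ for all $i\leqslant n$ and $k\geqslant k_0$ yields $d_{\max}^{\star}(x_k,y)<\epsilon$.

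For \emph{sufficiency} in (1) the same strategy works but requires two observations about the $t$-definer $\star$. First, applying (T3) and (T4) successively shows that for every $i$ and all $x,y\in X$,
\begin{equation*}
d_i^{\star}(x_i,y_i)=0\star\cdots\star d_i^{\star}(x_i,y_i)\star\cdots\star 0\leqslant d_{T}^{\star}(x,y),
\end{equation*}
so each component of a $d_{T}^{\star}$-Cauchy sequence is $d_i^{\star}$-Cauchy and hence converges to some $y_i$. Second, to assemble the limit I need, given $\epsilon>0$, an $\epsilon_1>0$ with $\underbrace{\epsilon_1\star\cdots\star\epsilon_1}_{n\text{ times}}<\epsilon$; this is obtained by iterating Lemma \ref{lem3} exactly $n-1$ times (the same device used in the sufficiency half of Theorem \ref{thm4}(1)). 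Choosing $k_0$ so that $d_i^{\star}(x_k^{(i)},y_i)<\epsilon_1$ for every $i$ and every $k\geqslant k_0$, monotonicity (T3) gives $d_{T}^{\star}(x_k,y)<\epsilon_1\star\cdots\star\epsilon_1<\epsilon$, establishing convergence.

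The only delicate step is the componentwise inequality for $d_T^{\star}$ and the iterated use of Lemma \ref{lem3}; once these are in place the argument is essentially the completeness counterpart of Theorem \ref{thm4} and no new machinery is needed.
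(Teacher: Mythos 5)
Your proposal is correct and follows essentially the same route as the paper: necessity via the closed isometric slice $X_i^{*}$ (the paper phrases this through Theorem \ref{thm10}, you argue directly that the limit stays in the closed slice, which is the same idea), and sufficiency via componentwise Cauchyness plus an iterated application of Lemma \ref{lem3} to get the $n$-fold $\star$-estimate. If anything, you are slightly more careful than the paper in justifying the inequality $d_i^{\star}(x_i,y_i)\leqslant d_T^{\star}(x,y)$ via (T3) and (T4), which the paper asserts without comment.
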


\begin{proof}
(1). Assume that the space $(X,d_{T}^\star)$ is complete. For every subspace $X_{m}^*=\prod_{i=1}^{n}A_{i}$ of $X$, where $A_{m}=X_{m}$ and $A_{i}=\{x_{i}^*\}$ is a one-point subset of $X_{i}$ for $i \neq m$, is closed in $(X,d_{T}^\star)$. Then the subspace $X_{m}^*$ is complete by Theorem \ref{thm10}. One can easily verify that $p_{m}^*=p_{m}\mid _{X_{m}^*}:X_{m}^*\rightarrow X_{m}$ is a isometric isomorphism, since $d_{T}^\star|_{X_{m}^*}(p_{m}^*(x),p_{m}^*(y))= d_{2}^\star(x,y)$. Therefore, for every Cauchy sequence $\{x_{n}\}$ in $(X_{m},d_{m}^\star)$, the sequence $\{{p_{m}^*}^{-1}(x_{n})\}$ is a Cauchy sequence in $X_{m}^*$. Then $$p_{m}^*(\lim_{n\rightarrow \infty} {p_{m}^*}^{-1}(x_{n}))=\lim_{n\rightarrow \infty}x_{n},$$ so that the space $(X_{m},d_{m}^\star)$ is complete.

 Assume  that all spaces $(X_{i},d_{i}^\star)$ are complete. Take any Cauchy sequence $\{y_{k}\}_{k \in \mathbb{N}}$ in $(X,d_{T}^\star)$, where $y_{k}=(x_{i}^k)$, for $1 \leqslant i\leqslant n$. Then the sequence $\{x_{i}^k\}_{k \in \mathbb{N}}$ is a Cauchy sequence in $(X_{i},d_{i}^\star)$ and thus converges to a point $x_{i}^0 \in X_{i}$. Now, we shall show that $\{y_{k}\}_{k \in \mathbb{N}}$ converges to a point $x^0 = (x_{i}^0)$. For $\epsilon> 0$, by Lemma \ref{lem3}, take an $\epsilon_{1}> 0$ such that $ \overbrace{\epsilon_{1}\star \epsilon_{1}\star \dots \star \epsilon_{1}}^{n \text{ times}}<\epsilon$. Since $\{x_{i}^k\}_{k \in \mathbb{N}}$ converges to a point $x_{i}^0$, there exists $m_{i} \in \mathbb{N}$, such that $d_{i}^\star(x_{i}^k,x_{i}^0)< \epsilon_{1}$, where $k\geqslant m_{i}$. Thus choose $m=\max_{1 \leqslant i\leqslant n}\{m_{i}\}$, such that $$d_{T}^\star(y_{k},x^0)=d_{1}^\star(x_{1}^k,x_{1}^0)\star d_{2}^\star(x_{2}^k,x_{2}^0)\star \dots \star d_{n}^\star(x_{n}^k,x_{n}^0)< \overbrace{\epsilon_{1}\star \epsilon_{1}\star \dots \star \epsilon_{1}}^{n \text{ times}}<\epsilon,$$ whenever $k\geqslant m$. We have shown that $(X,d_{T}^\star)$ is complete.

 (2). Assume that the space $(X,d_{T}^\star)$ is complete. The method of proof is the same as (1).

Assume that all spaces $(X_{i},d_{i}^\star)$ are complete. Take any Cauchy sequence $\{y_{k}\}_{k \in \mathbb{N}}$ in $(X,d_{\max}^\star)$, where $y_{k}=(x_{i}^k)$, for $1 \leqslant i\leqslant n$. Then the sequence $\{x_{i}^k\}_{k \in \mathbb{N}}$ is a Cauchy sequence in $(X_{i},d_{i}^\star)$ and thus converges to a point $x_{i}^0 \in X_{i}$. Now, we shall show that $\{y_{k}\}_{k \in \mathbb{N}}$ converges to a point $x^0 = (x_{i}^0)$. Since $\{x_{i}^k\}_{k \in \mathbb{N}}$ converges to a point $x_{i}^0$. For every $\epsilon > 0$ there exists $m_{i} \in \mathbb{N}$, such that $d_{i}^\star(x_{i}^k,x_{i}^0)< \epsilon$, where $k\geqslant m_{i}$. Without loss of generality, let $\max_{1\leqslant i\leqslant n} d_{i}^\star(x_{i}^k,x_{i}^0)=d_{j}^\star(x_{j}^k,x_{j}^0)$, then while $k\geqslant m=m_{j}$, such that $$d_{\max}^\star(y_{k},x^0)=\max_{1\leqslant i\leqslant n}d_{i}^\star(x_{i}^k,x_{i}^0)=d_{j}^\star(x_{j}^k,x_{j}^0)< \epsilon.$$  We have shown that $(X,d_{\max}^\star)$ is complete.
\end{proof}







\begin{theorem} \label{thm11.2}
If $\{(X_{\alpha},d_{\alpha}^\star)\}_{\alpha\in A}$ be a family of $\star$-metric spaces such that the metric $d_{i}^\star$ is bounded for each $\alpha\in A$, and $X=\bigoplus_{\alpha\in A}X_{\alpha}$  be the disjoint union of $\{X_{\alpha}\}$. Then $(X,d^\star_{q})$ defined by formula (3.3) is complete if and only if all spaces $(X_{\alpha},d_{\alpha}^\star)$ are complete.
\end{theorem}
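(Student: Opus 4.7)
The plan is to exploit the fact that in the disjoint-union $\star$-metric $d^\star_q$, any two points in different summands are at distance exactly $1$. This forces every Cauchy sequence to be eventually trapped in a single summand, reducing the problem to completeness of the individual pieces. I would handle necessity via Theorem \ref{thm10} (closed subspaces of complete spaces are complete) and sufficiency by the trapping argument together with the completeness hypothesis on each $(X_\alpha,d^\star_\alpha)$.

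For necessity, assume $(X,d^\star_q)$ is complete and fix $\alpha_0\in A$. I would first observe that $X_{\alpha_0}$ is clopen in $(X,\mathscr{T}_{d^\star_q})$: it is open (noted already in the paper), and its complement $\bigcup_{\alpha\neq\alpha_0}X_\alpha$ is a union of open sets, hence open, so $X_{\alpha_0}$ is closed. Then Theorem \ref{thm10} gives immediately that $(X_{\alpha_0},d^\star_q\!\mid_{X_{\alpha_0}})=(X_{\alpha_0},d^\star_{\alpha_0})$ is complete, because by the definition of $d^\star_q$ the two $\star$-metrics agree on $X_{\alpha_0}\times X_{\alpha_0}$.

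For sufficiency, suppose every $(X_\alpha,d^\star_\alpha)$ is complete, and let $\{x_n\}$ be a Cauchy sequence in $(X,d^\star_q)$. Applied with $\varepsilon=\tfrac12$, the Cauchy condition gives $k\in\mathbb{N}$ with $d^\star_q(x_n,x_m)<\tfrac12<1$ for all $n,m\geqslant k$. By definition of $d^\star_q$ in formula $(3.3)$, whenever $x_n$ and $x_m$ lie in different summands the distance equals $1$; hence there is a single $\alpha_0\in A$ such that $x_n\in X_{\alpha_0}$ for every $n\geqslant k$. The tail $\{x_n\}_{n\geqslant k}$ is then a Cauchy sequence in the complete $\star$-metric space $(X_{\alpha_0},d^\star_{\alpha_0})$, so it converges to some $x_0\in X_{\alpha_0}$ with respect to $d^\star_{\alpha_0}$, and since $d^\star_q$ and $d^\star_{\alpha_0}$ coincide on $X_{\alpha_0}$, the full sequence $\{x_n\}$ converges to $x_0$ in $(X,d^\star_q)$.

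The only nontrivial point is the trapping step: one must invoke strict inequality in the Cauchy condition with a threshold strictly below $1$ (for example $\tfrac12$) so that the ``$1$ between distinct summands'' clause in $(3.3)$ actually rules out cross-summand jumps, rather than merely $d^\star_q(x_n,x_m)\leqslant 1$. Everything else is bookkeeping, using the fact that on each summand $d^\star_q$ is literally $d^\star_\alpha$, so Cauchyness and convergence transfer back and forth between $(X,d^\star_q)$ and the subspaces $(X_\alpha,d^\star_\alpha)$ without any appeal to the $t$-definer $\star$ beyond what is already encoded in Proposition \ref{thm1} and the verification of $(M1)$--$(M3)$ for $d^\star_q$ carried out just before the theorem.
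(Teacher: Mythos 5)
Your proof is correct and follows essentially the same route as the paper: necessity via the clopen summands together with Theorem \ref{thm10}, and sufficiency by reducing a Cauchy sequence in $(X,d^\star_q)$ to one living in a single summand. In fact your sufficiency argument is more careful than the paper's own, which only remarks that every Cauchy sequence of each $(X_\alpha,d_\alpha^\star)$ is Cauchy in $(X,d^\star_q)$ and converges there, and omits the trapping step (taking $\varepsilon<1$ so that the clause ``$d^\star_q(x_n,x_m)=1$ across distinct summands'' forces the tail into one $X_{\alpha_0}$) that you correctly single out as the one nontrivial point.
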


\begin{proof}
Necessity. Assume that $(X,d^\star_{q})$ is complete. Then it is easy to see that all sets $X_{\alpha}$ are open-and-closed in $X$. So, all spaces $(X_{\alpha},d_{\alpha}^\star)$ are complete by Theorem \ref{thm10}.

Sufficiency. Assume that all spaces $(X_{\alpha},d_{\alpha}^\star)$ are complete. Then we have $(X,d^\star_{q})$ is complete, because every Cauchy sequence of $\star$-metric space $(X_{\alpha},d_{\alpha}^\star)$ is also a Cauchy sequence of $(X,d^\star_{q})$ and it converges to a certain point $x\in X_{\alpha}\subseteq X$.
\end{proof}

Baire theorem is a very important result in complete metric spaces. We shall extend this theorem to  complete
$\star$-metric spaces.

\begin{theorem}\label{thm12}
In a complete $\star$-metric space $(X,d^\star)$ the intersection $A=\bigcap_{n=1}^\infty A_{n}$ of a sequence $A_{1},A_{2},\dots$ of dense open subsets is a dense set.
\end{theorem}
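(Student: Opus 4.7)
The plan is to mimic the classical Baire category proof, using the Cantor-type intersection criterion from Theorem~\ref{thm7} in place of the usual nested-ball argument, and handling the absence of a simple triangle inequality via Lemma~\ref{lem3} together with the separate continuity of the $t$-definer $\star$.

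Fix an arbitrary non-empty open set $U\subseteq X$; it suffices to produce a point of $A\cap U$, after which density of $A$ follows from the arbitrariness of $U$. I would construct inductively points $x_n\in X$ and radii $r_n>0$ so that the sets $F_n=\{y\in X : d^\star(x_n,y)\leqslant r_n\}$ form a decreasing sequence of non-empty closed subsets with $F_1\subseteq U$, $F_n\subseteq A_n$, and $\delta(F_n)\to 0$. For the base step, density of $A_1$ gives some $x_1\in A_1\cap U$, and I choose $r_1>0$ small enough that $F_1\subseteq A_1\cap U$ (using openness of $A_1\cap U$ around $x_1$) and $r_1\star r_1<1$ (using Lemma~\ref{lem3}). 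For the inductive step, having $x_{n-1},r_{n-1}$, density of $A_n$ produces $x_n\in B_{d^\star}(x_{n-1},r_{n-1})\cap A_n$, and I pick $r_n>0$ to satisfy simultaneously (a) $r_n\star r_n<\tfrac{1}{n}$ via Lemma~\ref{lem3}, (b) $F_n\subseteq A_n$ via openness of $A_n$ at $x_n$, and (c) $d^\star(x_{n-1},x_n)\star r_n<r_{n-1}$, which is achievable because $d^\star(x_{n-1},x_n)\star 0=d^\star(x_{n-1},x_n)<r_{n-1}$ and $\star$ is continuous in its second variable (T1 combined with T5).

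Next I would verify the required properties of $F_n$. To see that $F_n$ is closed in $(X,\mathscr{T}_{d^\star})$, suppose $d^\star(x_n,y_0)>r_n$; separate continuity of $\star$ produces $\delta>0$ with $r_n\star\delta<d^\star(x_n,y_0)$, so for any $y\in B_{d^\star}(y_0,\delta)$ the $\star$-triangle inequality $d^\star(x_n,y_0)\leqslant d^\star(x_n,y)\star d^\star(y,y_0)$ forces $d^\star(x_n,y)>r_n$, giving $B_{d^\star}(y_0,\delta)\cap F_n=\emptyset$. For $y,z\in F_n$, the estimate $d^\star(y,z)\leqslant d^\star(y,x_n)\star d^\star(x_n,z)\leqslant r_n\star r_n$ combined with monotonicity (T3) yields $\delta(F_n)\leqslant r_n\star r_n<\tfrac{1}{n}$. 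The nesting $F_n\subseteq F_{n-1}$ is immediate from (c): for $y\in F_n$, $d^\star(x_{n-1},y)\leqslant d^\star(x_{n-1},x_n)\star d^\star(x_n,y)\leqslant d^\star(x_{n-1},x_n)\star r_n<r_{n-1}$. Theorem~\ref{thm7} then provides $\bigcap_{n=1}^{\infty}F_n=\{x_0\}$, and by construction $x_0\in U\cap\bigcap_{n=1}^{\infty}A_n=U\cap A$, completing the density argument.

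The main obstacle I anticipate is the coupled choice of $r_n$: it must simultaneously be small enough for diameter control via Lemma~\ref{lem3}, for the containment $F_n\subseteq A_n$ via openness, and for the nesting $F_n\subseteq F_{n-1}$ via continuity of $\star$ in the second variable. Whereas in the metric case a single halving of the radius suffices, here each of these conditions must be secured by a separate appeal to Lemma~\ref{lem3} or to the continuity of the $t$-definer.
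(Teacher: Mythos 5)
Your proposal is correct and follows essentially the same route as the paper: fix a non-empty open $U$, inductively build a decreasing sequence of closed balls $F_n$ with $F_1\subseteq A_1\cap U$, $F_n\subseteq A_n$ and $\delta(F_n)\to 0$, and invoke the Cantor-type Theorem~\ref{thm7} to extract a point of $A\cap U$. Your version is in fact a bit more careful than the paper's on the $\star$-specific points (controlling $\delta(F_n)$ by $r_n\star r_n$ via Lemma~\ref{lem3} rather than by $2\epsilon_n$, and securing the nesting through continuity of $\star$ in the second variable), but these are refinements of the same argument, not a different approach.
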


\begin{proof}
Let $A=\bigcap_{n=1}^\infty A_{n}$, for every $A_{n}$ is an open dense subset of complete $\star$-metric space $(X,d^\star)$. Now, construct the  sequence of closed subset $\{F_{n}\}$ which satisfies conditions in Theorem \ref{thm7}. Since $A_{1}$ is dense in $X$, and $U$ is a non-empty open set, then $A_{1}\cap U\neq \emptyset$. Take $x_{1}\in A_{1}\cap U$, since $ A_{1}\cap U$ is an open set, there exists $\epsilon _{1}$ which satisfies $0< \epsilon _{1}< 1/2^2$, such that $\overline{B_{d^\star}(x_{1},\epsilon _{1})} \subset A_{1}\cap U$. Since $A_{2}$ is dense in $X$, and $B_{d^\star}(x_{1},\epsilon _{1})$ is an open set, then $A_{2}\cap B_{d^\star}(x_{1},\epsilon _{1})\neq \emptyset$. Take $x_{2}\in A_{2}\cap B_{d^\star}(x_{1},\epsilon _{1})$, since $ A_{2}\cap B_{d^\star}(x_{1},\epsilon _{1})$ is an open set, there exists $\epsilon _{2}$ which satisfies $0< \epsilon _{2}< \epsilon _{1}/2$, such that $\overline{B_{d^\star}(x_{2},\epsilon _{2})} \subset A_{2}\cap B_{d^\star}(x_{1},\epsilon _{1})$. Obviously, $\overline{B_{d^\star}(x_{2},\epsilon _{2})} \subset \overline{B_{d^\star}(x_{1},\epsilon _{1})}$ and $\overline{B_{d^\star}(x_{2},\epsilon _{2})} \subset A_{2}\cap U$. Going on, one can easily obtain the sequence of closed subset $\{F_{n}\}=\{\overline{B_{d^\star}(x_{n},\epsilon _{n})}\}$ which satisfies $F_{n+1}\subset F_{n}$ and $\delta (F_{n}) \leqslant 1/2^n$ (n=1,2,\dots). This implies that $\{F_{n}\}$ satisfies conditions in Theorem \ref{thm7}. Noting that $F_{n}\subset A_{n}\cap U$, by Theorem \ref{thm7},  $\bigcap_{n=1}^\infty F_{n}\neq \emptyset$, then we have $$A\cap U=(\bigcap_{n=1}^\infty A_{n}) \cap U=\bigcap_{n=1}^\infty (A_{n} \cap U)\supset \bigcap_{n=1}^\infty F_{n} \neq \emptyset,$$ this implies that $A$ is dense in $X$.
\end{proof}

Every metric space is isometric to a subspace of a complete metric space. It would be interesting to find out whether this result remain valid in the class of $\star$-metric spaces:

\begin{question}
Is every $\star$-metric space isometric to a subspace of a complete $\star$-metric space?
\end{question}













\begin{thebibliography}{99}
\bibitem{dc}  P.S. Alexandroff, V.V. Niemytzki,
\newblock The condition of metrizability of topological spaces and the axiom of symmetry (in Russian),
\newblock Sbornik Mathematics, 3(45) (1938) 663-672.

\bibitem{BR}  R. Engelking,
\newblock General topology,
\newblock Heldermann Verlag, Berlin, 1989.

\bibitem{GGS}  G.S. Gao,
\newblock Topological space theory (2nd Edition),
\newblock Science Press, Beijing, 2008.

\bibitem{6}  A. George, P. Veeramani,
\newblock On some result in fuzzy metric space,
\newblock Fuzz Sets and Systems, 64(3) (1994) 395-399.

\bibitem{4}  V. Gregori, R. Salvador,
\newblock Fuzzy quasi metric spaces,
\newblock Applied General Topology, 1(5) (2004) 129-136.

\bibitem{3}  M. Jleli, B. Samet,
\newblock On a new generalization of metric spaces,
\newblock Journal of Fix Point Theory Application 20 (2018) 1-20.

\bibitem{AG}  S.M.A. Khatami, M. Mirzavaziri,
\newblock Yet another generalization of the notion of a metric space,
\newblock arXiv:2009.00943v1 [math.GN], 2020

\bibitem{9}  M. Mirzavaziri,
\newblock Function valued metric spaces,
\newblock Surveys in Mathematics and its Applications 5 (2010) 321-332.

\bibitem{Mu}  J.R. Munkres,
\newblock Topology (2nd Edition),
\newblock Prentice Hall, New Jersey, 2000.

\bibitem{11}  W.H. Schikhof,
\newblock Ultrametric calculus,
\newblock Cambridge University Press, Cambridge, 1985.

\bibitem{5}  W. Shatanawi, A. Pitea,
\newblock Some coupled fixed point theorems in quasi-partial metric spaces,
\newblock Fixed Point Theorey and Applications, 2013, 2013:153.

\bibitem{14}  W.A. Wilson,
\newblock On quasi-metric spaces,
\newblock American Journal of Mathematics, 53(3) (1931) 675-684.

\bibitem{CR}  W.A. Wilson,
\newblock On semi-metric spaces,
\newblock American Journal of Mathematics 53(2) (1931) 361-373.








\end{thebibliography}
\end{document}